\newtheorem{thm}{Theorem}[section]
\newtheorem*{thm*}{Theorem}
\newtheorem*{nota}{Notation}
\newtheorem{lem}[thm]{Lemma}
\newtheorem{prop}[thm]{Proposition}
\newtheorem{defn}[thm]{Definition}
\theoremstyle{remark}
\newtheorem{rem}[thm]{Remark}
\newtheorem{exam}[thm]{Example}
\numberwithin{equation}{section}
\newcommand{\al}{\alpha}
\def\vz{\varepsilon}
\def\({\Bigl(}
\def \){ \Bigr)}
\def\x{{\bf x}}
\def\y{{\bf y}}
 \def\RR{{\mathbb R}}
\def\ss{{\Bbb S}^{d}}
\def\x{{\bf x}}
\def\y{{\bf y}}
\def\y{{\bf y}}
\begin{document}
\def\RR{\mathbb{R}}
\def\Exp{\text{Exp}}
\def\FF{\mathcal{F}_\al}

\title[Weighted least $\ell_p$ approximation]{Weighted least $\ell_p$ approximation  on compact Riemannian manifolds}

\author[J. Li]{Jiansong Li} \address{ School of Mathematical Sciences, Capital Normal
University, Beijing 100048,
 China}
\email{2210501007@cnu.edu.cn}

\author[Y. Ling]{Yun Ling} \address{ School of Mathematical Sciences, Capital Normal
University, Beijing 100048,
 China.}
 \email{18158616224@163.com}

 \author[J. Geng]{Jiaxin Geng} \address{ School of Mathematical Sciences, Capital Normal
University, Beijing 100048,
 China.}
 \email{gengjiaxin1208@163.com}

\author[H. Wang]{Heping Wang}
 \address{ School of Mathematical Sciences, Capital Normal
University, Beijing 100048,
 China}
\email{wanghp@cnu.edu.cn}

\date{\today}
\keywords{Marcinkiewicz-Zygmund inequality; Weighted least
$\ell_p$ approximation; Least squares quadrature; Sobolev and Beov
spaces; Sampling numbers; Optimal quadratures}

\subjclass[2010]{41A17, 41A55, 41A63, 65D15, 65D30, 65D32}

\begin{abstract}Given a sequence of Marcinkiewicz-Zygmund inequalities in $L_2$ on a compact
space,
 Gr\"ochenig  in \cite{G} discussed  weighted least squares approximation
 and least squares quadrature. Inspired by this work, for all $1\le p\le\infty$, we develop  weighted
 least $\ell_p$ approximation induced by a sequence of Marcinkiewicz-Zygmund inequalities
 in $L_p$ on a compact  smooth Riemannian manifold $\Bbb M$  with  normalized Riemannian measure (typical examples are the torus
and the sphere). In this paper we derive corresponding
approximation theorems with the error measured in $L_q,\,1\le
q\le\infty$,
 and least quadrature errors for both Sobolev spaces $H_p^r(\Bbb M), \, r>d/p$ generated by eigenfunctions associated with the Laplace-Beltrami
 operator and Besov spaces $B_{p,\tau}^r(\Bbb M),\, 0<\tau\le \infty, r>d/p $ defined by best polynomial approximation.
Finally, we  discuss the optimality of the obtained results by
giving  sharp estimates of sampling numbers and optimal quadrature
errors for the aforementioned spaces.
\end{abstract}

\maketitle
\input amssym.def

\section{Introduction}\label{sec1}

This paper is concerned with constructive polynomial approximation
on a  compact  Riemannian manifold  that uses function values (the
samples) at selected well-distributed points (standard
information). Here the {\bf well-distributed} points indicate that
those points constitute an $L_p$-Marcinkiewicz-Zygmund  family.

For $1\le
p\le\infty$, an $L_p$-Marcinkiewicz-Zygmund  family on a compact space is a double-indexed set of points $\{{\bf x}_{n,k}\}$
with a family of positive numbers $\{\tau_{n,k}\}$ such that the
sampled $\ell_p$ norm of the $n$-th layer
$\left(\sum_{k}\tau_{n,k}|Q({\bf x}_{n,k})|^p\right)^{1/p}$ is
equivalent to the $L_p$ norm for  all ``polynomials" $Q$ of degree at most
$n$  with uniform constants. $L_p$-Marcinkiewicz-Zygmund
families on various compact spaces have been considered in large
number of papers and widely used in approximation theory, number
theory, signal processing, etc. The existence of
$L_p$-Marcinkiewicz-Zygmund families has been established in
many compact spaces, such as the torus, the unit sphere, unit
ball, and more general compact manifolds. Additionally, necessary
and sufficient density conditions for $L_p$-Marcinkiewicz-Zygmund
families on these domains have been discussed. See e.g.
\cite{AMO,BO,DaX,FM,M,MO,MP,OP,OS,PX,W1}.

Based on $L_2$-Marcinkiewicz-Zygmund families on a compact space
$M$, Gr\"ochenig in \cite{G} explored the weighted least squares
approximation operators and least squares quadrature rules.
Remarkably, he derived approximation theorems and least squares
quadrature errors for Sobolev spaces. Specifically, the following
results were obtained.

\begin{thm*}[\cite{G}]Assume that ${M}$ is a compact space with a probability measure $\nu$. Let $L_{n}$ be
the weighted least squares approximation operator and $I_n$ be the
least squares quadrature rule induced by an
$L_2$-Marcinkiewicz-Zygmund family $\mathcal{X}$ on $M$ with
associated weights $\tau$ and global condition number $\kappa$.
Under the hypothesis of Weyl's law, consider the Sobolev space
$H^r({M})$ generated by eigenfunctions associated to an unbounded
positive operator $A$ on $L_2({M})$. We have for all $f\in H^r(M)$
with $r>d/2$,
\begin{equation}\label{1.1}
  \|f-L_{n}(f)\|_{L_2(M)}\le
c(1+\kappa^2)^{1/2}n^{-r+d/2}\|f\|_{H^r({M})},
\end{equation}
and
$$
  \left|\int_{{M}}f({\bf x}){\rm d}\nu({\bf x})-I_n(f)\right|\le
c(1+\kappa^{1/2})n^{-r+d/2}\|f\|_{H^r({M})},
$$
where $c>0$ is independent of $n$, $\kappa$, $f$, or $\mathcal{X}$. See \cite{G} for details.
\end{thm*}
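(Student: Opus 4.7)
The plan is to exploit two ingredients in tandem: (i) the defining optimality of $L_n f$ as the minimizer of $\sum_k\tau_{n,k}|f(x_{n,k})-Q(x_{n,k})|^2$ over $Q\in\Pi_n$ (the span of eigenfunctions of $A$ with eigenvalue at most $n$); and (ii) the two-sided $L_2$-Marcinkiewicz--Zygmund inequality $A\|Q\|_{L_2}^2\le\sum_k\tau_{n,k}|Q(x_{n,k})|^2\le B\|Q\|_{L_2}^2$ on $\Pi_n$, where $\kappa=B/A$. Letting $P_n$ denote the $L_2$-orthogonal projection onto $\Pi_n$, the starting point is the triangle split
\[ f-L_n f=(f-P_n f)+(P_n f-L_n f), \]
whose first summand is a pure truncation error while the second lies in $\Pi_n$ and can therefore be transferred to the sampled norm.

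For the in-space piece I would apply the lower MZ bound, then the minimality of $L_n f$ combined with the triangle inequality in the sampled norm, to obtain
\[ \|P_n f-L_n f\|_{L_2}^2\le 4A^{-1}\sum_k\tau_{n,k}|f(x_{n,k})-P_n f(x_{n,k})|^2. \]
Feeding the constant $Q\equiv 1$ into the upper MZ bound gives $\sum_k\tau_{n,k}\le B$, so the right-hand sum is dominated by $B\|f-P_n f\|_\infty^2$. The decisive analytic input is then the Weyl-type estimate $\|f-P_n f\|_\infty\le c\,n^{-r+d/2}\|f\|_{H^r}$ for $r>d/2$, which I would derive by expanding $f-P_n f=\sum_{\lambda_k>n}\hat f(k)\phi_k$, applying Cauchy--Schwarz, and invoking the on-diagonal spectral-function asymptotics implied by Weyl's law. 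Since $\|f-P_n f\|_{L_2}\le c\,n^{-r}\|f\|_{H^r}$ is absorbed into the same rate, this yields the $L_2$-inequality in \eqref{1.1}; a bare-hands reading gives the constant $c(1+\kappa^{1/2})$, and to sharpen to the $c(1+\kappa^2)^{1/2}$ stated by Gr\"ochenig I would recast the argument in terms of the sampling operator and its Moore--Penrose pseudo-inverse, whose operator norm is governed by the MZ condition number.

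For the quadrature statement I would use that $I_n g=\int_M L_n g\,d\nu$ is exact on $\Pi_n$ and that $1\in\Pi_n$ implies $\int_M(f-P_n f)\,d\nu=0$, giving
\[ \int_M f\,d\nu-I_n f=-I_n(f-P_n f). \]
Writing $I_n=\sum_k w_k\delta_{x_{n,k}}$ and applying Cauchy--Schwarz with the weights $\tau_{n,k}^{\pm 1/2}$ splits the quadrature error into the already-controlled sampled $\ell_2$-norm of $f-P_n f$ and the dual quadrature norm $(\sum_k w_k^2/\tau_{n,k})^{1/2}$. The latter is the norm of the Riesz representer of integration on $\Pi_n$ in the sampled inner product, which the lower MZ inequality bounds by $A^{-1/2}$; tracking constants gives the claimed $c(1+\kappa^{1/2})$ dependence.

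The main obstacle I foresee is the sharp uniform approximation bound $\|f-P_n f\|_\infty\le c\,n^{-r+d/2}\|f\|_{H^r}$: a crude Sobolev embedding only yields the exponent $-r+d/2+\varepsilon$ for arbitrary $\varepsilon>0$, and removing the $\varepsilon$ requires either a dyadic Littlewood--Paley decomposition paired with Weyl's law or a direct use of on-diagonal heat-kernel asymptotics. Everything else in the argument is careful bookkeeping of constants under the MZ condition number.
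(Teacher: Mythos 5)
Your argument is essentially correct, but you should first be aware that the paper does not actually prove this statement: it is quoted from Gr\"ochenig \cite{G} purely as background, with the proof deferred to that reference. What you have reconstructed is, in substance, Gr\"ochenig's own route: split off the orthogonal projection $P_nf$, use minimality of $L_n$ together with the two\--sided MZ inequality to pass between $\|\cdot\|_{L_2}$ and the sampled norm, bound $\sum_k\tau_{n,k}\le B$ by testing $Q\equiv 1$, and control $\|f-P_nf\|_\infty$ by Cauchy--Schwarz plus a dyadic summation of the spectral function under Weyl's law. All of these steps check out, including the tail estimate (your worry about losing an $\varepsilon$ is unfounded: the dyadic decomposition you describe gives the clean exponent $-r+d/2$), and your quadrature argument via the Riesz representer of integration in the sampled inner product is also correct --- though it can be shortcut by the paper's inequality \eqref{2.7}, $|\int_Mf\,{\rm d}\nu-I_n(f)|\le\|f-L_n(f)\|_{L_2(M)}$, which follows from $I_n(f)=\int_M L_n(f)\,{\rm d}\nu$ and Cauchy--Schwarz against the probability measure. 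One correction on constants: since $\kappa\ge1$, your ``bare-hands'' constant $1+\kappa^{1/2}$ already implies the stated $c(1+\kappa^2)^{1/2}$ (indeed $1+\kappa^{1/2}\le 2(1+\kappa^2)^{1/2}$), so the proposed Moore--Penrose ``sharpening'' goes in the wrong direction and is unnecessary; obtaining $1+\kappa^{1/2}$ is exactly the improvement this paper advertises over \eqref{1.1}. Finally, note how your approach differs from the machinery the paper builds for its own Theorems 1.1--1.3: there the projection $P_n$ is replaced by the filtered operator $V_n$ and the dyadic blocks $\sigma_j(f)\in\mathcal P_{2^j}$, and the crude step $\|f-P_nf\|_{(2)}^2\le B\|f-P_nf\|_\infty^2$ --- which is the sole source of the factor $n^{d/2}$ --- is replaced by the regularity condition of Lemma \ref{lem2.1} applied blockwise. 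That substitution upgrades the rate from $n^{-r+d/2}$ to the optimal $n^{-r+d(1/p-1/q)_+}$ on manifolds and extends the argument to all $p$, which your single $L_\infty$ bound cannot deliver.
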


Several notable points in \cite{G} should be mentioned.
\begin{itemize}
  \item Firstly, the least squares quadrature rules were derived from Marcinkiewicz-Zygmund inequalities in $L_2$ by means of frame theory,
   whereas traditional quadrature rules were often associated with Marcinkiewicz-Zygmund inequalities in $L_1$.
  \item Secondly, the obtained error for the Sobolev spaces with smoothness index $r$ is $\mathcal{O}(n^{-r+d/2})$,
  which is almost optimal, and the constants depend only on the global condition number $\kappa$ of the $L_2$-Marcinkiewicz-Zygmund family.
  \item Thirdly, the proofs were based on the hypothesis of Weyl's law, which is related to the critical Sobolev
  exponent and leads to explicit and transparent error estimates.
\end{itemize}

However, the generality of the hypothesis results in suboptimal
error estimates. Additionally, Gr\"ochenig remarks in \cite{G}
that this technique  is certainly limited, which works only for
$L_2$ since it utilizes orthogonal projections, and may require
different techniques for general $p$-norms.

On the unit sphere $\Bbb S^d$ in $\Bbb R^{d+1}$, a special case of
compact Riemannian manifolds, Lu and Wang \cite{LW} used the
orthogonality of projections and the regularity of discrete
measures  to achieve optimal errors of the weighted least squares
approximation  and least squares quadrature, which can be stated
as follows. However the technique in \cite{LW}  works only for
$L_2$.

\begin{thm*}[\cite{LW}] Let $\Bbb S^{d}$ be the unit sphere in $\Bbb R^{d+1}$ with normalized surface measure
$\sigma$. Let $L_{n}$ be the weighted least squares approximation
operator and $I_n$ be least squares quadrature rule induced by an
$L_2$-Marcinkiewicz-Zygmund family $\mathcal{X}$ on $\ss$ with
associated weights $\tau$ and global condition number $\kappa$.
Consider the Sobolev space $H^r(\Bbb S^{d})$ generated by
eigenfunctions associated to the Laplace-Beltrami operator on
$\Bbb S^d$. We have for all $f\in H^r(\Bbb S^d)$ with $r>d/2$,
$$ \|f-L_{n}(f)\|_{L_2(\Bbb S^d)}\le
c(1+\kappa^2)^{1/2}n^{-r}\|f\|_{H^r(\Bbb S^{d})},$$ and
$$\left|\int_{\Bbb S^{d}}f({\bf x}){\rm d}\sigma ({\bf x})-I_n(f)\right|\le
c(1+\kappa^2)^{1/2}n^{-r}\|f\|_{H^r(\Bbb S^{d})},$$ where $c>0$
is independent of $n$, $\kappa$, $f$, or $\mathcal{X}$. See \cite{LW} for details.
 \end{thm*}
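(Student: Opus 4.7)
The plan is to decompose the error through the $L_2$-orthogonal projection $P_n\colon L_2(\ss)\to\Pi_n$ onto the space of spherical polynomials of degree at most $n$. Since the weighted least-squares operator $L_n$ reproduces $\Pi_n$, one has
\begin{equation*}
f-L_n(f)=(f-P_nf)-L_n(f-P_nf),
\end{equation*}
and the first summand is controlled by the classical Jackson inequality on the sphere, $\|f-P_nf\|_{L_2(\ss)}\le c\,n^{-r}\|f\|_{H^r(\ss)}$, which already achieves the optimal rate with $\kappa$-free constants. The whole problem therefore reduces to bounding $\|L_n(f-P_nf)\|_{L_2(\ss)}$ by the same rate $n^{-r}$, at the price of a benign factor in $\kappa$.

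Writing $g:=f-P_nf$, the lower $L_2$-MZ inequality applied to $L_ng\in\Pi_n$ gives $\|L_ng\|_{L_2}^2\le A^{-1}\sum_k\tau_k|L_ng(x_k)|^2$ (where $A$ is the lower MZ constant, with $\kappa=B/A$), and the least-squares minimality (compared with the trial polynomial $0$) together with the triangle inequality in the weighted $\ell_2$-norm yield $\sum_k\tau_k|L_ng(x_k)|^2\le 4\sum_k\tau_k|g(x_k)|^2$. Consequently $\|L_ng\|_{L_2}^2\le 4A^{-1}\sum_k\tau_k|g(x_k)|^2$, and the problem is reduced to the \emph{aliasing} estimate $\sum_k\tau_k|g(x_k)|^2\le c\,n^{-2r}\|f\|_{H^r(\ss)}^2$; the $\kappa$-factor then enters through $A^{-1}$ after the standard scale normalisation of $\tau$.

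The hard step is this aliasing bound, and it is here that the special geometry of the sphere is exploited. The plan is to first prove a sampling--Sobolev inequality
\begin{equation*}
\sum_k\tau_k|h(x_k)|^2\le c\bigl(\|h\|_{L_2(\ss)}^2+n^{-2r}\|h\|_{H^r(\ss)}^2\bigr),\qquad h\in H^r(\ss),\ r>d/2,
\end{equation*}
by (i) invoking the regularity of the discrete measure $\tau$ forced by the $L_2$-MZ property on $\ss$ (namely $\sum_{k\colon x_k\in B(x,n^{-1})}\tau_k\le c\,n^{-d}$), (ii) covering $\ss$ by spherical caps of radius $n^{-1}$, and (iii) applying a local Sobolev inequality of the form $|h(x_k)|^2\le c\bigl(n^d\!\int_{B(x_k,n^{-1})}\!|h|^2+n^{d-2r}\|h\|^2_{H^r(B(x_k,n^{-1}))}\bigr)$, valid because $r>d/2$. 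The bounded overlap of the caps then sums these local bounds to the displayed global inequality. Applied to $h=g=f-P_nf$, the two terms on the right are controlled by Jackson's inequality $\|g\|_{L_2}^2\le c\,n^{-2r}\|f\|_{H^r}^2$ and the stability $\|g\|_{H^r}\le\|f\|_{H^r}$, yielding the required $n^{-2r}\|f\|_{H^r}^2$.

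Finally, the quadrature estimate follows at once: since $L_nf\in\Pi_n$ and $1\in\Pi_n$, the least-squares quadrature rule is $I_n(f)=\int_{\ss} L_nf\,d\sigma$, and Cauchy--Schwarz against $1$ gives
\begin{equation*}
\Bigl|\int_{\ss} f\,d\sigma-I_n(f)\Bigr|=\Bigl|\int_{\ss}(f-L_nf)\,d\sigma\Bigr|\le\|f-L_nf\|_{L_2(\ss)},
\end{equation*}
so the quadrature bound inherits the $L_2$-approximation bound. The novelty of the Lu--Wang technique, and what saves the $d/2$ exponent compared with Gr\"ochenig's argument, is precisely the sampling--Sobolev step above: integrating $\tau$ locally against $|h|^2$ rather than pulling out a global $L_\infty$ norm and losing a factor $n^{d/2}$ to Sobolev embedding.
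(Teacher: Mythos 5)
Your proposal is correct in its overall architecture and reaches the stated bounds (in fact with the better constant $1+\kappa^{1/2}$), but it diverges from the route taken in the paper at the one step that carries all the difficulty. Both arguments share the same skeleton: split off a polynomial approximant that $L_n$ reproduces, use the lower MZ inequality to pass from $\|L_n g\|_{L_2}$ to the weighted discrete norm, use minimality of the least-squares solution against the trial polynomial $0$ to reduce everything to the discrete norm $\sum_k\tau_{n,k}|g({\bf x}_{n,k})|^2$ of the residual $g$, and finish the quadrature claim by Cauchy--Schwarz exactly as in \eqref{2.7}. Where you differ is in the aliasing estimate. The paper (Lemma \ref{thm3.3}) expands the residual into dyadic polynomial blocks $\sigma_j(f)=V_{2^{j-1}}(f)-V_{2^{j-2}}(f)\in\mathcal{P}_{2^j}$ and invokes the converse direction of Lemma \ref{lem2.1}: the regularity condition forced by the upper MZ inequality implies $\sum_k\tau_{n,k}|Q({\bf x}_{n,k})|^2\lesssim B(2^j/n)^d\|Q\|_{L_2}^2$ for polynomials of the \emph{higher} degree $2^j\ge n$, and the geometric series over $j$ converges precisely when $r>d/2$. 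You instead prove a sampling--Sobolev inequality directly on $h=f-P_nf$ by covering with caps of radius $n^{-1}$ and applying a scaled local Sobolev embedding. This is a legitimate alternative (and is closer in spirit to what Lu--Wang actually do on the sphere), but it hides one nontrivial point: for non-integer $r$ the summability $\sum_{\text{caps}}\|h\|_{H^r(B)}^2\lesssim\|h\|_{H^r(\ss)}^2$ of localized fractional Sobolev norms over a bounded-overlap cover is not automatic and needs either a careful definition of the local norms or a Littlewood--Paley/atomic characterization; the paper's dyadic-block argument sidesteps this entirely because it only ever evaluates the discrete measure on genuine polynomials, and for the same reason it generalizes verbatim to all $p$ and to Besov classes, which your projection-based $L_2$ argument does not. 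Also note that the uniform bound $\sum_{k:\,{\bf x}_{n,k}\in{\rm B}({\bf x},1/n)}\tau_{n,k}\lesssim n^{-d}$ you quote carries the constant $B$ from the upper MZ bound, so the final constant is $\kappa^{1/2}=(B/A)^{1/2}$ rather than $A^{-1/2}$ alone; your remark about normalisation should be made explicit, but it is not a gap.
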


The purpose of this paper is to extend these results to  general
$p$-norm on compact Riemannian manifolds $\Bbb M$. Our
contributions can be summarized in five aspects.
\begin{itemize}
  \item Firstly, instead of relying on the orthogonality of projections, we utilize the minimality of the weighted least $\ell_p$
  approximation operators along with the regularity of discrete measures (i.e. Lemma \ref{lem2.1}) to derive approximation theorems and
  least squares quadrature errors.
  \item Secondly, our error estimates, $\mathcal{O}(n^{-r+d(1/p-1/q)_+})$ for the weighted least $\ell_p$ approximation operators measured in
  $L_q$ and $\mathcal{O}(n^{-r})$ for least squares quadrature rules, are asymptotically
  optimal. Here, $a_+:=\max\{a,0\}$ for $a\in\Bbb R$.
  This holds particularly true for scenarios involving the $d$-torus $\Bbb T^d$ and the  sphere $\Bbb S^d$.
  \item Thirdly, we improve the dependency on the global condition number $\kappa$ of the constant in \eqref{1.1} by replacing $(1+\kappa^2)^{1/2}$
  with $1+\kappa^{1/2}$, resulting in a slight enhancement when $\kappa$ is large.
  \item Fourthly, we also obtain  sharp estimates of (linear) sampling numbers and optimal quadratures to elaborate the optimality of obtained errors.
  \item Finally, we obtain the  above results not only for Sobolev spaces $H_p^r(\Bbb M),\,r>d/p$ generated by eigenfunctions associated with the Laplace-Beltrami
 operator and Besov spaces $B_{p,\tau}^r(\Bbb M),\, 0<\tau\le \infty,\, r>d/p $ defined by best ''polynomial" approximation.
\end{itemize}

 In what follows, we always assume that $\Bbb M$ is a compact connected smooth $d$-dimensional Riemannian manifold without boundary
 equipped with  normalized Riemannian measure $\mu$, and $X_p^r(\Bbb M)$ denotes Sobolev spaces $H_p^r(\Bbb M)$ or Besov spaces
 $B_{p,\tau}^r(\Bbb M)$. See Sect.\ref{sect2} for definitions. The main results can be formulated as below.

\begin{thm}\label{thm1.1}Let $1\le p,q\le \infty$, $0<\tau\le\infty$, and $r>d/p$. Suppose that $\mathcal{X}$
is an $L_{p}$-Marcinkiewicz-Zygmund family  on $\Bbb M$ with associated weights $\tau$ and global condition number
$\kappa$, and $L_{n,p}^{\Bbb M}$ is the weighted least $\ell_p$
approximation operator induced by $\mathcal{X}$. If
 $f\in X_{p}^r(\Bbb M)$, then for   $1\le p<\infty$,
  $$
  \|f-L_{n,p}^{\Bbb M}(f)\|_{L_q(\Bbb M)}\le C(1+\kappa^{1/p})n^{-r+d\left(\frac1p-\frac1q\right)_+}\|f\|_{X_{p}^r(\Bbb M)},
  $$
and  for $p=\infty$,
$$
  \|f-L_{n,\infty}^{\Bbb M}(f)\|_{L_q(\Bbb M)}\le C(1+\kappa)n^{-r}\|f\|_{X_{\infty}^r(\Bbb M)},
$$
where $C>0$ is independent of $n$, $\kappa$, $f$, or
$\mathcal{X}$.
\end{thm}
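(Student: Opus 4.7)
The plan is to compare $L_{n,p}^{\Bbb M}(f)$ with a well-chosen polynomial approximant $P_n^{*}$ of $f$ of degree at most $n$, and to transfer the comparison from the sampled $\ell_p$ norm to the $L_p$ norm via the Marcinkiewicz--Zygmund equivalence. First I would select $P_n^{*}$, for instance through a filtered-kernel or de la Vall\'ee Poussin-type operator built from the Laplace--Beltrami eigenfunctions, that is simultaneously a near-best approximant of $f$ in $L_p(\Bbb M)$ and in $L_q(\Bbb M)$. The Jackson-type inequalities on $\Bbb M$ available for both scales $H_p^r$ and $B_{p,\tau}^r$ with $r>d/p$ then give
$$\|f-P_n^{*}\|_{L_q(\Bbb M)}\le C\,n^{-r+d(1/p-1/q)_+}\|f\|_{X_p^r(\Bbb M)}.$$

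Next I would decompose $f-L_{n,p}^{\Bbb M}(f)=(f-P_n^{*})+(P_n^{*}-L_{n,p}^{\Bbb M}(f))$ and treat the polynomial error $E:=P_n^{*}-L_{n,p}^{\Bbb M}(f)$ separately. The lower MZ bound yields $\|E\|_{L_p(\Bbb M)}^p\le C\sum_k\tau_{n,k}|E(\x_{n,k})|^p$; splitting via the discrete triangle inequality and invoking the defining minimality of $L_{n,p}^{\Bbb M}(f)$ against the test polynomial $P_n^{*}$ collapses the right-hand side to a constant multiple of $\sum_k\tau_{n,k}|f(\x_{n,k})-P_n^{*}(\x_{n,k})|^p$. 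This last quantity, the sampled $\ell_p$ norm of the \emph{non}-polynomial residual $f-P_n^{*}$, is controlled through Lemma~\ref{lem2.1} (regularity of the discrete measures induced by $\tau$), which converts a sampled norm into a continuous norm at the cost of the factor $\kappa^{1/p}$. Combined with the Jackson estimate this yields $\|E\|_{L_p(\Bbb M)}\le C\kappa^{1/p}n^{-r}\|f\|_{X_p^r(\Bbb M)}$, after which Nikolskii's inequality for polynomials of degree $n$ gives $\|E\|_{L_q(\Bbb M)}\le C n^{d(1/p-1/q)_+}\|E\|_{L_p(\Bbb M)}$. Summing the two contributions produces the claimed bound with constant $1+\kappa^{1/p}$ for $1\le p<\infty$.

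The endpoint $p=\infty$ is handled by a parallel argument: the $L_\infty$-MZ inequality takes the form $\|Q\|_\infty\le\kappa\max_k|Q(\x_{n,k})|$, and running the minimality--triangle step with maxima in place of sums delivers $\|E\|_{L_\infty(\Bbb M)}\le C\kappa\|f-P_n^{*}\|_{L_\infty(\Bbb M)}$. Combined with Jackson in $L_\infty$ for $X_\infty^r$ this produces the constant $1+\kappa$, and the passage to $L_q$ is trivial since $L_\infty\hookrightarrow L_q$.

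The main obstacle I anticipate is the control of $\sum_k\tau_{n,k}|f(\x_{n,k})-P_n^{*}(\x_{n,k})|^p$ for a \emph{non}-polynomial argument: the MZ inequality does not apply directly, and a naive $L_\infty$-type bound would introduce a spurious factor $n^{d/p}$ and wreck the target rate. Overcoming this requires a version of Lemma~\ref{lem2.1} that is tight in the smoothness scale, absorbing the $X_p^r$ norm with the correct power of $n$ and introducing only the factor $\kappa^{1/p}$ in front; this is the chief analytical input behind Theorem~\ref{thm1.1}. A secondary but routine issue is selecting $P_n^{*}$ so as to be simultaneously near-best for both the Sobolev and the Besov scale, which is standard on $\Bbb M$ via filtered projections.
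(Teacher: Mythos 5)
Your strategy coincides with the paper's: the paper proves Theorem \ref{thm1.1} via Lemma \ref{thm3.3} (with $t=p$), taking $P_n^{*}=V_{2^{s-1}}(f)$ (the filtered operator, $2^s\le n<2^{s+1}$), splitting off the polynomial difference, passing to the discrete norm by the lower MZ bound, and using the minimality of $L_{n,p}^{\Bbb M}$ exactly as you describe. The one step you leave open --- and correctly flag as the crux --- is the bound on $\|f-P_n^{*}\|_{(p)}=\bigl(\sum_k\tau_{n,k}|f({\bf x}_{n,k})-P_n^{*}({\bf x}_{n,k})|^p\bigr)^{1/p}$ for the non-polynomial residual. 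As you note, Lemma \ref{lem2.1} in the form you invoke it does not apply, since it discretizes only polynomials; so as written your argument has a genuine gap at precisely this point.

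The paper closes it not with a strengthened regularity lemma but with a dyadic decomposition: $f-V_{2^{s-1}}(f)=\sum_{j\ge s+1}\sigma_j(f)$ with $\sigma_j(f)=V_{2^{j-1}}(f)-V_{2^{j-2}}(f)\in\mathcal{P}_{2^j}$, so that each block \emph{is} a polynomial, of degree $2^j\ge n$. The converse half of Lemma \ref{lem2.1} then applies to each block at the cost of the factor $B\,(2^j/n)^d$, and combining this with the block estimate $\|\sigma_j(f)\|_{L_p(\Bbb M)}\lesssim 2^{-jr}\|f\|_{X_p^r(\Bbb M)}$ gives
$$\|\sigma_j(f)\|_{(p)}\lesssim B^{1/p}\,n^{-d/p}\,(2^j)^{-r+d/p}\|f\|_{X_p^r(\Bbb M)},$$
as in \eqref{3.7}. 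Summing the geometric series over $j\ge s+1$ --- convergent precisely because $r>d/p$, which is where the full strength of that hypothesis (and not merely $r>d(1/p-1/q)_+$) is used --- yields $\|f-V_{2^{s-1}}(f)\|_{(p)}\lesssim B^{1/p}n^{-r}\|f\|_{X_p^r(\Bbb M)}$, and the factor $A^{-1/p}$ from the lower MZ bound combines with $B^{1/p}$ to give $\kappa^{1/p}$. With this insertion your argument matches the paper's proof; the $p=\infty$ endpoint you handle exactly as the paper does (there the discrete sup norm is trivially dominated by the $L_\infty$ norm and no regularity lemma is needed).
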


 We can also use the weighted least $\ell_q$
approximation operator to obtain approximation theorem as follow.

 \begin{thm}\label{rem1.1}Let $1\le p,q\le \infty$, $0<\tau\le\infty$, and $r>d\max\{1/p,1/q\}$. Suppose that $\mathcal{X}$
is an $L_{q}$-Marcinkiewicz-Zygmund family on $\Bbb M$ with
associated weights $\tau$ and global condition number $\kappa$,
and $L_{n,q}^{\Bbb M}$ is the weighted least $\ell_q$
approximation operator induced by $\mathcal{X}$. If
 $f\in X_{p}^r(\Bbb M)$, then for $1\le q<\infty$,
  $$
  \|f-L_{n,q}^{\Bbb M}(f)\|_{L_q(\Bbb M)}\le C(1+\kappa^{1/q})n^{-r+d\left(\frac1p-\frac1q\right)_+}\|f\|_{X_{p}^r(\Bbb M)},
 $$
and for $q=\infty$,
$$
  \|f-L_{n,\infty}^{\Bbb M}(f)\|_{L_\infty(\Bbb M)}\le C(1+\kappa)n^{-r+d/p}\|f\|_{X_{p}^r(\Bbb M)},
$$
where $C>0$ is independent of $n$, $\kappa$, $f$, or $\mathcal{X}$.
\end{thm}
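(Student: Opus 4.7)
The plan is to parallel the proof of Theorem~\ref{thm1.1}, interchanging the role of $L_p$-minimization for $L_q$-minimization while keeping the source smoothness in $X_p^r(\Bbb M)$. Let $\eta_n f$ be a near-best polynomial approximation to $f$ of degree $\le n$ (e.g.\ a de la Vall\'ee Poussin-type operator adapted to $\Bbb M$). Split
$$
 f-L_{n,q}^{\Bbb M}(f) = (f-\eta_n f) + (\eta_n f - L_{n,q}^{\Bbb M}(f))
$$
and estimate the two pieces separately. The first piece is controlled by standard Jackson-type results for $X_p^r(\Bbb M)$ together with Nikolskii's inequality, yielding $\|f-\eta_n f\|_{L_q(\Bbb M)}\le C\, n^{-r+d(1/p-1/q)_+}\|f\|_{X_p^r(\Bbb M)}$ under the hypothesis $r>d\max\{1/p,1/q\}$.

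For the second piece, consider first $q<\infty$. Since $\eta_n f-L_{n,q}^{\Bbb M}(f)$ is a polynomial of degree $\le n$, the $L_q$-Marcinkiewicz-Zygmund inequality applied to $\mathcal{X}$ produces
$$
 \|\eta_n f - L_{n,q}^{\Bbb M}(f)\|_{L_q(\Bbb M)} \le C\kappa^{1/q}\Bigl(\sum_k \tau_{n,k}\bigl|(\eta_n f-L_{n,q}^{\Bbb M}(f))(x_{n,k})\bigr|^q\Bigr)^{1/q}.
$$
The minimality of $L_{n,q}^{\Bbb M}(f)$ with respect to the weighted $\ell_q$-norm at the sampling points, combined with the triangle inequality, reduces the right-hand side to $2\bigl(\sum_k \tau_{n,k}|(f-\eta_n f)(x_{n,k})|^q\bigr)^{1/q}$. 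Thus everything hinges on bounding this weighted discrete $\ell_q$-norm of $f-\eta_n f$, an object that is not itself a polynomial, so MZ does not apply to it directly.

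The technical heart of the argument is this discretization. I would decompose $f-\eta_n f=\sum_{j:2^j>n}\Delta_j f$ into dyadic polynomial blocks $\Delta_j f$ of degree $\sim 2^j$ coming from the Littlewood-Paley/Besov characterization of $X_p^r(\Bbb M)$. For each block, Lemma~\ref{lem2.1} on the regularity of the discrete measure (combined with Bernstein-Nikolskii) should give
$$
 \Bigl(\sum_k \tau_{n,k}|\Delta_j f(x_{n,k})|^q\Bigr)^{1/q}\le C\,(2^j/n)^{d/q}\|\Delta_j f\|_{L_q(\Bbb M)}\le C\,(2^j/n)^{d/q}\,2^{jd(1/p-1/q)_+-jr}\|f\|_{X_p^r(\Bbb M)}.
$$
Summing over $j$ with $2^j>n$ yields a geometric series that converges exactly when $r>d/q+d(1/p-1/q)_+=d\max\{1/p,1/q\}$, producing the claimed rate $n^{-r+d(1/p-1/q)_+}$. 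For $q=\infty$ the analysis is cleaner: the sum becomes a maximum, the $L_\infty$-MZ contributes a factor $\kappa$ in place of $\kappa^{1/q}$, and the Sobolev embedding $X_p^r(\Bbb M)\hookrightarrow L_\infty(\Bbb M)$ furnishes $\|f-\eta_n f\|_{L_\infty(\Bbb M)}\le Cn^{-r+d/p}\|f\|_{X_p^r(\Bbb M)}$ directly.

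The main obstacle I anticipate is exactly this non-polynomial discretization step: because the MZ inequality only bounds polynomials of degree $\le n$, passing from the weighted $\ell_q$-norm of the tail $f-\eta_n f$ back to something intrinsic to $X_p^r(\Bbb M)$ requires a transfer mechanism for polynomials of higher degree, which is the role of Lemma~\ref{lem2.1}. Keeping track of the $j$-dependence so that the geometric series closes precisely under the sharp assumption $r>d\max\{1/p,1/q\}$ is where the care is concentrated.
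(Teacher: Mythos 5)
Your proposal is correct and follows essentially the same route as the paper: the paper proves a slightly more general Lemma~\ref{thm3.3} (allowing the minimization norm index $t$ to differ from both $p$ and $q$) and obtains Theorem~\ref{rem1.1} as the case $t=q$, using exactly your decomposition $f-L_{n,q}^{\Bbb M}(f)=(f-V_{2^{s-1}}(f))+(V_{2^{s-1}}(f)-L_{n,q}^{\Bbb M}(f))$ with the filtered operator $V_n$ in the role of your $\eta_n$, the MZ inequality plus minimality to reduce to the discrete norm of the tail, and the dyadic blocks $\sigma_j(f)\in\mathcal P_{2^j}$ handled via Lemma~\ref{lem2.1} and Nikolskii, with the geometric series closing precisely under $r>d\max\{1/p,1/q\}$. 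The only cosmetic difference is that the MZ step contributes $A^{-1/q}$ and Lemma~\ref{lem2.1} contributes $B^{1/q}$, which combine to the stated $\kappa^{1/q}=(B/A)^{1/q}$, rather than $\kappa^{1/q}$ appearing at the MZ step alone as you wrote.
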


As a consequence of Theorem \ref{rem1.1}, we will obtain least squares quadrature errors.

\begin{thm}\label{thm1.2}  Let $2\le p\le\infty$, $0<\tau\le\infty$, and $r>d/2$. Suppose that $\mathcal{X}$
is an $L_{2}$-Marcinkiewicz-Zygmund family on $\Bbb M$ with associated weights $\tau$ and global condition number
$\kappa$, and $I_n^{\Bbb M}$ is  the  least squares quadrature rule induced by $\mathcal{X}$.
Then for any $f\in X_p^r(\Bbb M)$,
$$
  \left|\int_{\Bbb M}f({\bf x}){\rm d}\mu({\bf x})-I_n^{\Bbb M}(f)\right|\le C(1+\kappa^{1/2})n^{-r}\|f\|_{X_p^r(\Bbb M)},
$$
where $C>0$  is independent of $n$, $\kappa$, $f$, or $\mathcal{X}$.
\end{thm}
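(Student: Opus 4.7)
The plan is to reduce the quadrature error to the $L_2$-approximation error of the weighted least $\ell_2$ operator $L_{n,2}^{\Bbb M}$, and then invoke Theorem~\ref{rem1.1} with $q = 2$. The essential structural fact used is that, by construction, the least squares quadrature rule is obtained by integrating the least squares polynomial, i.e.,
$$
I_n^{\Bbb M}(f) = \int_{\Bbb M} L_{n,2}^{\Bbb M}(f)(\x)\, d\mu(\x).
$$
This identity (already recorded in the definition of $I_n^{\Bbb M}$) is the only feature of $I_n^{\Bbb M}$ that the argument needs.

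First, I would use the identity above and the exactness of $\mu$ to write
$$
\left|\int_{\Bbb M} f\, d\mu - I_n^{\Bbb M}(f)\right|
  = \left|\int_{\Bbb M}\bigl(f - L_{n,2}^{\Bbb M}(f)\bigr)\, d\mu\right|
  \le \|f - L_{n,2}^{\Bbb M}(f)\|_{L_1(\Bbb M)}.
$$
Since $\mu$ is a probability measure, Cauchy--Schwarz (or Jensen) upgrades this $L_1$-norm to an $L_2$-norm: $\|g\|_{L_1(\Bbb M)} \le \|g\|_{L_2(\Bbb M)}$ for every $g$. This is where the $L_2$-hypothesis on $\mathcal{X}$ enters naturally.

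Next I would apply Theorem~\ref{rem1.1} with $q = 2$. Under the hypotheses of Theorem~\ref{thm1.2} we have $p \ge 2$, hence $1/p \le 1/2$, which gives $(1/p - 1/2)_+ = 0$ and $d\max\{1/p,1/2\} = d/2$. Thus the smoothness requirement $r > d\max\{1/p,1/q\}$ of Theorem~\ref{rem1.1} becomes exactly $r > d/2$, matching our assumption, and $\kappa^{1/q} = \kappa^{1/2}$. Theorem~\ref{rem1.1} therefore yields
$$
\|f - L_{n,2}^{\Bbb M}(f)\|_{L_2(\Bbb M)} \le C\bigl(1 + \kappa^{1/2}\bigr)\, n^{-r}\, \|f\|_{X_p^r(\Bbb M)},
$$
and combining with the previous chain of inequalities produces the stated bound.

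Since the deduction is essentially a direct corollary, there is no serious obstacle: one only needs the definitional identity $I_n^{\Bbb M}(f) = \int L_{n,2}^{\Bbb M}(f)\, d\mu$, the trivial embedding $L_2(\Bbb M) \hookrightarrow L_1(\Bbb M)$, and the observation that the range $p \ge 2$ zeros out the $(1/p - 1/q)_+$ loss while preserving the factor $\kappa^{1/2}$. The only minor care point is checking that these index conditions align with Theorem~\ref{rem1.1} uniformly over $2 \le p \le \infty$, including $p = \infty$, which is immediate.
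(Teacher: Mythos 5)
Your proposal is correct and follows essentially the same route as the paper: the authors likewise deduce Theorem~\ref{thm1.2} directly from Theorem~\ref{rem1.1} (with $q=2$) combined with the inequality \eqref{2.7}, which is exactly the bound $\left|\int_{\Bbb M}f\,{\rm d}\mu-I_n^{\Bbb M}(f)\right|\le\|f-L_{n}^{\Bbb M}(f)\|_{L_2(\Bbb M)}$ that you rederive from the identity $I_n^{\Bbb M}(f)=\int_{\Bbb M}L_{n,2}^{\Bbb M}(f)\,{\rm d}\mu$ and the embedding $L_2(\Bbb M)\hookrightarrow L_1(\Bbb M)$. Your index bookkeeping ($p\ge 2$ killing the $(1/p-1/2)_+$ loss and aligning $r>d/2$ with the hypothesis of Theorem~\ref{rem1.1}) matches the paper's intent.
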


The remaining paper is structured as follows. We begin by
providing an overview of fundamental concepts related to compact
Riemannian manifolds in Subsect.2.1.  Then we delve into the
exploration of $L_p$-Marcinkiewicz-Zygmund families in
Subsect.2.2,  address the weighted least $\ell_p$ approximation
problems in Subsect.2.3, and  present  some auxiliary results in
Subsect.2.4, including   results of filtered polynomial
approximation. In Sect.\ref{sect3}, we give the detailed proofs of
Theorems 1.1-1.3.  Sect.\ref{sect4} is devoted to elaborating the
optimality of the obtained results by examining the (linear)
sampling numbers and optimal quadrature errors. Finally, in
Sect.\ref{sect5}, we conclude with a summary of our findings and
offer further discussions on potential directions for future
research.

\begin{nota}Throughout the paper, the notation $a_{n}\asymp b_{n}$ means $a_{n}\lesssim b_{n}$  and $a_{n}\gtrsim b_{n}$.
Here, $a_{n}\lesssim b_{n}\,(a_{n}\gtrsim b_{n})$ means that there exists a constant $c>0$ independent of $n$ such that $a_{n}\leq c b_{n}\,(b_{n}\leq c a_{n})$.
\end{nota}

\section{Preliminaries}\label{sect2}

In this section, we will provide  necessary materials required for
the remainder of the paper. We will begin by providing an overview
of fundamental concepts pertaining to Riemannian manifolds in a
general sense. Subsequently, we will introduce the concepts of
weighted least $\ell_p$ approximation and least squares quadrature
rules. We will also present necessary knowledge and establish key
lemmas.

\subsection{Compact Riemannian manifolds}\

Let $\Bbb M$ be a compact connected smooth $d$-dimensional
Riemannian manifold without boundary equipped with the normalized
Riemannian measure $\mu$. Typical examples are compact homogeneous
manifolds, which is of the form $G/H$. Here $G$ is a compact Lie
group and $H$ is its closed subgroup. For example, the $d$-torus
$\Bbb T^d$, $d=1,2,\dots$; the spheres $\Bbb S^d$,
$d=1,2,3,\dots$; the real projective spaces ${\rm P}^d(\Bbb R),
d=2,3,\dots$; the complex projective spaces ${\rm P}^{d}(\Bbb C),
d=4,6,8,\dots$ the  quaternion projective spaces ${\rm
P}^{2d}(\Bbb H),d=4,6,8,\dots$; the Cayley elliptic plane ${\rm
P}^{16}(Cay)$. See \cite{C,Ga,H1,H2} for details.

Let ${\rm dist}_{\Bbb M}({\bf x},{\bf y})$ be the geodesic
distance of points ${\bf x}$ and ${\bf y}$ on $\Bbb M$, and let
${\rm B}({\bf x},r)$ denote the ball of radius $r>0$ centred at
${\bf x}\in\Bbb M$. Then $(\Bbb M,\mu)$ is \emph{Ahlfors
$d$-regular} which means that there exists a constant
$C_{d}>0$ depending only on  the
dimension $d$ such that
$$C_{d}^{-1}\,r^d\le \mu\left({\rm B}({\bf x},r)\right)\le C_{d}\,r^d,\ {\rm for\ all}\ {\bf x}\in\Bbb M\ {\rm and}\ 0<r\le{\rm diam}(\Bbb M),$$
where $\mu(E)=\int_E{\rm d}\mu({\bf x})$ for a measurable subset $E$ of $\Bbb M$.

For $1\le p<\infty$, let $L_p(\Bbb M)$ be the set of all complex
valued measurable function defined on $\Bbb M$ endowed with
finite norm
$$\|f\|_{L_p(\Bbb M)}:=\left(\int_{\Bbb M}|f({\bf x})|^p{\rm d}\mu({\bf x})\right)^{1/p},$$
and let  $L_\infty(\Bbb M)\equiv C(\Bbb M)$ be the set of all complex valued continuous functions defined on $\Bbb M$ endowed with uniform norm
$$\|f\|_{L_\infty(\Bbb M)}:=\sup\limits_{{\bf x}\in\Bbb M}|f({\bf x})|.$$
 In particular, $L_2(\Bbb M)$ is a Hilbert space with inner product $$\langle
f,g\rangle_{L_2(\Bbb M)}:=\int_{\Bbb M}f({\bf x})\overline{g({\bf x})}{\rm d}\mu({\bf x}),\ {\rm for}\ f,g\in
L_2(\Bbb M),$$where $\overline{g}$ is the complex conjugate to $g$.

Let $\Delta_{\Bbb M}$ be the Laplace-Beltrami operator on $\Bbb M$, which is self-adjoint in $L_2(\Bbb M)$. This operator has a sequence of
eigenvalues $\{-\lambda_k^2\}_{k=0}^\infty$ arranged by
$$0=\lambda_0\le\lambda_1\le\lambda_2\le\dots\le\lambda_k\le\dots\to\infty,$$
and a complete orthonormal  system of eigenfunctions
$\{\phi_k\}_{k=0}^\infty$. Without loss of generality, we choose
each $\phi_k$ to be real-valued and $\phi_0=1$. Then
$\{\phi_k\}_{k=0}^\infty$  can form an orthonormal basis for
$L_2(\Bbb M)$.
 For each $n=0,1,2,\dots$,
with $\mathcal{P}_n$ we denote the set of ``polynomials" of degree
at most $ n$ on $\Bbb M$ by
$$\mathcal{P}_n:={\rm span}\left\{\phi_k:\,\lambda_k\le n\right\}.$$
This space  is called the \emph{diffusion polynomial space of
degree $n$ }on $\Bbb M$, and an element of $\mathcal{P}_n$ is
called a \emph{diffusion polynomial of degree at most $n$}. In the
following, we will refer to diffusion polynomials simply as
polynomials.

For $f\in L_p(\Bbb M)$, $1\le p\le \infty$, the formal Fourier
coefficients of $f$ are
$$\widehat{f}(k):=\int_{\Bbb M}f({\bf x}){\phi_k({\bf x})}{\rm d}\mu({\bf x}),\ k=0,1,2,\dots.$$
Then the \emph{Sobolev space} $H_p^r(\Bbb M)$, $r>0$ is defined by
$$H_p^r(\Bbb M):=\left\{f\in L_p(\Bbb M):\,(I-\Delta_{\Bbb M})^{r/2}f\in L_p(\Bbb M)\right\},$$with  Sobolev norm
$$\|f\|_{H_p^r(\Bbb M)}:=\left\|(I-\Delta_{\Bbb M})^{r/2}f\right\|_{L_p(\Bbb M)}=\left\|\sum_{k=0}^\infty(1+\lambda_k^2)^{r/2}\widehat{f}(k)\phi_k\right\|_{L_p(\Bbb M)},$$
where $I$ is the identity on $L_p(\Bbb M)$. More about the Sobolev
spaces $H_p^r(\Bbb M)$ may be found in the paper  \cite{BCCGST,
KS}.

\begin{rem}In particular, when $p=2$, it reduces to $H^r(\Bbb M)\equiv H_2^r(\Bbb M)$ with the Sobolev norm
$$\|f\|_{H^r(\Bbb M)}:=\left(\sum_{k=0}^\infty(1+\lambda_k^2)^r|\widehat{f}(k)|^2\right)^{1/2},$$as a special case in \cite[Section 3]{G}.
Under the conditions in this paper, \emph{two-sides Wely's law} always holds with exponent $d$ and constants $C>c>0$, i.e.,
$$
c\,n^d\le\sum_{k:\,\lambda_k\le n}|\phi_k({\bf x})|^2\le C\,n^d,
$$
which, by integrating with respect to ${\bf x}$, implies that
$$\dim\mathcal{P}_n:=\#\{\phi_k:\,\lambda_k\le n\}\asymp n^d,$$
and the Sobolev space $H^r(\Bbb M)$ is continuously embedded into
$C(\Bbb M)$ if and only if  $r>d/2$ (see e.g. \cite[Proposition
4.2]{G}).
\end{rem}

Let
$$E_n(f)_{p}:=\inf_{g\in\mathcal{P}_n}\|f-g\|_{L_{p}(\Bbb M)}
$$ denote the \emph{best approximation} of  $f$ in $L_p(\Bbb M)$ by $\mathcal P_n$.
Given $1\le p\le \infty$, $r>0$, and $0<\tau\le\infty$, we can
define the \emph{Besov space} $B_{p,\tau}^r(\Bbb M)$ to be the
space of all real functions $f$ with finite quasi-norm
$$\|f\|_{B_{p,\tau}^r(\Bbb M)}:=\|f\|_{L_{p}(\Bbb M)}+\left(\sum\limits_{j=0}^\infty2^{jr\tau}E_{2^j}(f)_{p}^\tau\right)^{1/\tau}.$$
There are other equivalent definitions of  the Besov spaces that
are equivalent (see e.g. \cite{GP,DI,P}).

For $1\le p\le\infty$, the Sobolev space $H_p^r(\Bbb M)$ is
continuously embedded into $C(\Bbb M)$ if and only if $r>d/p$ (see
e.g. \cite[Section 2.7]{A} or \cite{BCCGST}). Meanwhile, the Besov
space $B_{p,\tau}^r(\Bbb M)$ is also continuously embedded into
$C(\Bbb M)$ if $r>d/p$. These embedding relations  guarantee that
any function in the above Sobolev and Besov spacea has a
representation by a continuous function. In the sequel, we denote
 $X_p^r(\Bbb M)$  to be the  Sobolev space $H_p^r(\Bbb M)$ or the
Besov space
 $B_{p,\tau}^r(\Bbb M)$.

\subsection{$L_p$-Marcinkiewicz-Zygmund families}\

Now we give the precise definition of Marcinkiewicz-Zygmund families in $L_p$.

\begin{defn} Let $\mathcal{X}=\{{\bf x}_{n,k}:\, k=1,2,\dots,N_n,\,n=1,2,\dots\}$
be a doubly-indexed set of points in $\Bbb M$, and
 $\tau=\{\tau_{n,k}:\,k=1,2,\dots,N_n,\,n=1,2,\dots\}$ be a family of positive weights.  Then

(i) for $1\le  p<\infty$, $\mathcal{X}$ is
 called an $L_p$-Marcinkiewicz-Zygmund family on $\Bbb M$, denoted by $L_p$-MZ, if there exist constants $A,\, B >0$ independent
of $n$ such that
\begin{equation}\label{2.2}
  A\|Q\|_{L_p(\Bbb M)}^p\le\sum\limits_{k=1}^{N_n}\tau_{n,k}|Q({\bf x}_{n,k})|^p\le B\|Q\|_{L_p(\Bbb M)}^p,\ {\rm for\ all}\ Q\in \mathcal{P}_n.
\end{equation}
\ \ \ (ii) for $p=\infty$, $\mathcal{X}$ is called an
$L_{\infty}$-Marcinkiewicz-Zygmund family on $\Bbb M$, denoted by
$L_{\infty}$-MZ, if there exist constants $A>0$ independent of $n$
such that
\begin{equation}\label{2.3}
  A\|Q\|_{L_\infty(\Bbb M)}\le\max\limits_{1\le k\le N_n}|Q({\bf x}_{n,k})|\le \|Q\|_{L_\infty(\Bbb M)},\ {\rm for\ all}\ Q\in \mathcal{P}_n.
\end{equation}

The ratios $\kappa=B/A$ for $1\le p<\infty$ and $\kappa=1/A$ if
$p=\infty$ are the {global condition numbers} of the
$L_p$-Marcinkiewicz-Zygmund  family, and $\mathcal{X}_n:=\{{\bf
x}_{n,k}:\,k=1,2,\dots,N_n\}$ is the $n$-th layer of
$\mathcal{X}$.
\end{defn}

Let $\mathcal{X}$ be an $L_{p}$-Marcinkiewicz-Zygmund family with associated
weights $\tau$ on $\Bbb M$. We set for $1\le p<\infty$,
$$
  \mu_n:=\sum\limits_{k=1}^{N_n}\tau_{n,k}\delta_{{\bf x}_{n,k}},
$$where $\delta_{\bf z}(g)=g({\bf z})$ for a function $g$ is the evaluation operator.
For any $f\in C(\Bbb M)$, we define for $1\le p<\infty$,
$$\|f\|_{(p)}:=\left(\int_{\Bbb M}|f({\bf x})|^p{\rm d}\mu_n\right)^{1/p}=\left(\sum\limits_{k=1}^{N_n}\tau_{n,k}|f({\bf x}_{n,k})|^p\right)^{1/p},$$
and for  $p=\infty$,  $$\|f\|_{(\infty)}:=\max\limits_{1\le k\le
N_n}|f({\bf x}_{n,k})|.$$ Hence, for $1\le p\le \infty$,
$(\mathcal{P}_n,\|\cdot\|_{(p)})$ is a Banach space.  Using the
inequalities  \eqref{2.2} and \eqref{2.3}, we can conclude that
the $L_{p}$ norm of a polynomial of degree at most $n$ on $\Bbb M$
is comparable to its discrete version, which is given by the
weighted $ \ell_p$ norm of its restriction to $ \mathcal{X}_n $.
Moreover,  if $ Q\in \mathcal{P}_n $ and $ Q({\bf x}_{n,k}) = 0 $
for $ k = 1,\dots,N_n $, then $ Q = 0 $, which means that $
\mathcal X_n $ contains more than $ \dim\mathcal{P}_n $ points.
Therefore, $ \mathcal X_n $ is not an interpolating set for $
\mathcal{P}_n $.

\vskip2mm

Filbir and Mhaskar in \cite{FM} established the existence of
$L_p$-MZ families on compact smooth connected Riemanian manifold
$\mathbb{M}$.
 Before we proceed to state this result, we introduce the concept of a maximal separated subset on $\mathbb{M}$.

Let $\Xi$ be a finite subset of $ \mathbb{M}$. We say that $\Xi$
is \emph{$\varepsilon$-separated} if the minimum distance between
any two distinct points $\boldsymbol{\xi}$ and $\boldsymbol{\eta}$
in $\Xi$ is at least $\varepsilon$. In other words, we require
$$q(\Xi):=\min\limits_{\substack{\boldsymbol{\xi},\boldsymbol{\eta}\in\Xi
\\ \boldsymbol{\xi}\neq\boldsymbol{\eta}}}{\rm dist}_{\mathbb{M}}(\boldsymbol{\xi},\boldsymbol{\eta})\geq\varepsilon.$$
Furthermore, if an $\varepsilon$-separated subset $\Xi$ of
$\mathbb{M}$ satisfies
$$\delta(\Xi):=\max\limits_{\boldsymbol{\xi}\in\mathbb{M}}{\rm
dist}_{\mathbb{M}}(\boldsymbol{\xi},\Xi)\leq\varepsilon,$$then we
refer to it as a \emph{maximal $\varepsilon$-separated} subset of
$\mathbb{M}$, where ${\rm dist}_{\Bbb M}(\cdot,\Xi):\,\Bbb
M\to[0,\infty)$ is the distance function of $\Xi$ with respect to
${\rm dist}_{\Bbb M}$  defined by
$${\rm dist}_{\Bbb M}(\boldsymbol{\xi},\Xi):=\inf_{\boldsymbol\eta\in \Xi}{\rm dist}_{\Bbb M}(\boldsymbol{\xi},\boldsymbol\eta).$$
 In simpler terms, a maximal $\varepsilon$-separated subset $\Xi$ is
 an $\varepsilon$-separated subset of $\mathbb{M}$ such that every point in $\mathbb{M}$
 lies within a distance of $\varepsilon$ from at least one point in $\Xi$.
  We can express this as $$\mathbb{M}=\bigcup\limits_{\boldsymbol{\xi}\in\Xi}{\rm B}(\boldsymbol{\xi},\varepsilon).$$
  It is worth noting that if $\Xi$ is a maximal $\varepsilon$-separated subset of $\mathbb{M}$,
   then there exists a constant $C_d>0$ that solely depends on the dimension $d$,
    such that for all ${\bf x}\in\mathbb{M}$, we have
    \begin{equation}\label{2.4-0}
    1\leq \sum_{\boldsymbol{\xi}\in\Xi}\chi_{{\rm B}(\boldsymbol{\xi},\varepsilon)}({\bf x})\leq C_d.
    \end{equation}
    We can conclude from Ahlfors $d$-regularity that $\# \Xi \asymp \varepsilon^{-d}$,
    which essentially means that the cardinality of $\Xi$ is proportional to the reciprocal of the $\varepsilon$ raised to the power of the dimension $d$.

 Filbir and Mhaskar  gave the existence of
$L_p$-MZ families on $\mathbb{M}$ in \cite[Theorem 5.1]{FM}). For
our purpose we  restated it as  follows.

\begin{lem}{\rm (\cite[Theorem 5.1]{FM}).}\label{lem4.5} Let $1\le p\le\infty$.
Then there exists a constant $\delta_0>0$ depending only on $d$ and $p$ such that
 given a maximal $\delta/n$-separated subset $\Xi$ of $\Bbb M$ with $\delta\in(0,\delta_0]$,
 there exists a set of positive numbers $\{\lambda_{\boldsymbol\xi}\}_{ \boldsymbol\xi\in\Xi}$ such that for all  $f\in\mathcal{P}_n$,
$$\|f\|_{L_p(\Bbb M)}\asymp \Bigg\{\begin{aligned}&\Big(\sum_{\boldsymbol\xi\in\Xi}\lambda_{\boldsymbol\xi}|f(\boldsymbol\xi)|^p\Big)^{1/p},  &&1\le p<\infty,
  \\&\max\limits_{\boldsymbol\xi\in\Xi}|f(\boldsymbol\xi)|, &&p=\infty,\end{aligned},$$
  and $\lambda_{\boldsymbol\xi}\asymp \mu \left({\rm B}(\boldsymbol\xi,\delta/n)\right)\asymp n^{-d}$
  for all $\boldsymbol\xi\in\Xi$, where the constants of equivalence depend only on $d$ and $p$.
\end{lem}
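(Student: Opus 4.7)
The plan is to follow the standard Filbir--Mhaskar template: build a Voronoi-type partition of $\Bbb M$ subordinate to $\Xi$, take the weights $\lambda_{\boldsymbol\xi}$ to be the measures of the cells, and then convert the sum $\sum_{\boldsymbol\xi}\lambda_{\boldsymbol\xi}|Q(\boldsymbol\xi)|^{p}$ into a perturbed Riemann sum for $\int_{\Bbb M}|Q|^{p}\,d\mu$ via a local Bernstein/Nikolskii pair for diffusion polynomials. Concretely, I would form the Voronoi cells $U_{\boldsymbol\xi}=\{{\bf x}\in\Bbb M:{\rm dist}_{\Bbb M}({\bf x},\boldsymbol\xi)\le{\rm dist}_{\Bbb M}({\bf x},\boldsymbol\eta)\text{ for all }\boldsymbol\eta\in\Xi\}$, breaking ties measurably. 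Maximal $\delta/n$-separation of $\Xi$ forces ${\rm B}(\boldsymbol\xi,\delta/(2n))\subset U_{\boldsymbol\xi}\subset {\rm B}(\boldsymbol\xi,\delta/n)$ and the cells tile $\Bbb M$ modulo a null set, while Ahlfors $d$-regularity yields $\mu(U_{\boldsymbol\xi})\asymp(\delta/n)^{d}\asymp n^{-d}$ with constants depending only on $d$. Setting $\lambda_{\boldsymbol\xi}:=\mu(U_{\boldsymbol\xi})$ already delivers the weight asymptotics asserted in the lemma.

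The load-bearing estimate is a local oscillation bound: there exists $C=C(d,p)$ such that for every $Q\in\mathcal P_n$ and every ${\bf x},{\bf y}\in\Bbb M$ with ${\rm dist}_{\Bbb M}({\bf x},{\bf y})\le\delta/n$,
$$\bigl||Q({\bf x})|^{p}-|Q({\bf y})|^{p}\bigr|\le C\delta\cdot\max_{\boldsymbol\zeta\in {\rm B}({\bf x},1/n)}|Q(\boldsymbol\zeta)|^{p}.$$
For $1\le p<\infty$ this follows from the elementary inequality $\bigl||a|^{p}-|b|^{p}\bigr|\le p|a-b|(|a|^{p-1}+|b|^{p-1})$ combined with a Bernstein bound $|Q({\bf x})-Q({\bf y})|\le Cn\,{\rm dist}_{\Bbb M}({\bf x},{\bf y})\,\|Q\|_{L_\infty({\rm B}({\bf x},2/n))}$ on small geodesic balls, together with a local Nikolskii inequality $\max_{{\rm B}({\bf x},1/n)}|Q|^{p}\lesssim n^{d}\int_{{\rm B}({\bf x},2/n)}|Q|^{p}\,d\mu$. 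Both are standard tools obtained on $\Bbb M$ from the spectral functional calculus of $\Delta_{\Bbb M}$ and the Gaussian heat-kernel estimates that also underpin the two-sided Weyl law used throughout the paper.

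Integrating the oscillation bound over ${\bf y}\in U_{\boldsymbol\xi}$, cancelling the $n^{d}$ from Nikolskii against $\mu(U_{\boldsymbol\xi})\asymp n^{-d}$, and summing over $\boldsymbol\xi$ using the bounded overlap \eqref{2.4-0} of the balls ${\rm B}(\boldsymbol\xi,2/n)$, I obtain for $1\le p<\infty$
$$\Bigl|\sum_{\boldsymbol\xi\in\Xi}\lambda_{\boldsymbol\xi}|Q(\boldsymbol\xi)|^{p}-\int_{\Bbb M}|Q|^{p}\,d\mu\Bigr|\le C_{1}\delta\,\|Q\|_{L_{p}(\Bbb M)}^{p},$$
so choosing $\delta_{0}$ with $C_{1}\delta_{0}\le 1/2$ delivers the required two-sided equivalence for every $\delta\in(0,\delta_{0}]$. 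For $p=\infty$ the upper bound $\max_{\boldsymbol\xi}|Q(\boldsymbol\xi)|\le\|Q\|_{L_\infty}$ is trivial, and for the reverse direction I would pick ${\bf y}^{*}$ attaining $\|Q\|_{L_\infty}$ and a nearest $\boldsymbol\xi^{*}\in\Xi$, which sits within $\delta/n$ of ${\bf y}^{*}$ by maximality, and then apply the $p=1$ version of the oscillation bound to conclude $|Q(\boldsymbol\xi^{*})|\ge(1-C\delta)\|Q\|_{L_\infty}$, shrinking $\delta_{0}$ if necessary. The main obstacle is the Bernstein/Nikolskii pair: on $\Bbb T^{d}$ or $\Bbb S^{d}$ it is classical, but on a general compact Riemannian manifold it must be extracted from heat-kernel estimates via the functional calculus of $\Delta_{\Bbb M}$, which is precisely where \cite{FM} concentrates its effort; once that is in hand, the covering and perturbation argument above is routine.
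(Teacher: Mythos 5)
First, a point of reference: the paper does not prove this lemma at all --- it is quoted verbatim as \cite[Theorem 5.1]{FM}, so there is no in-paper argument to compare against. Your reconstruction follows the right general template (Voronoi cells subordinate to $\Xi$, weights $\lambda_{\boldsymbol\xi}=\mu(U_{\boldsymbol\xi})$, and a perturbed Riemann sum), and the geometric part (the inclusions ${\rm B}(\boldsymbol\xi,\delta/(2n))\subset U_{\boldsymbol\xi}\subset{\rm B}(\boldsymbol\xi,\delta/n)$ and $\lambda_{\boldsymbol\xi}\asymp n^{-d}$ via Ahlfors regularity) is fine. But there are two genuine problems. The first is that the ``load-bearing'' local Bernstein and local Nikolskii inequalities for diffusion polynomials on a general compact manifold are asserted as standard; on $\Bbb T^d$ or $\Bbb S^d$ they are classical, but on $\Bbb M$ they are exactly the content that \cite{FM} has to build from heat-kernel localization, so the sketch defers the entire difficulty rather than resolving it.

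The second problem is quantitative and would make the argument fail as written for $d\ge 2$. After applying your oscillation bound and local Nikolskii inequality on each cell, you sum $C\delta\int_{{\rm B}(\boldsymbol\xi,2/n)}|Q|^p\,{\rm d}\mu$ over $\boldsymbol\xi\in\Xi$ and invoke the bounded overlap \eqref{2.4-0}. But \eqref{2.4-0} controls the overlap of the balls ${\rm B}(\boldsymbol\xi,\delta/n)$; the balls ${\rm B}(\boldsymbol\xi,2/n)$ are $2/\delta$ times larger, and a packing argument with Ahlfors regularity only gives overlap multiplicity $\asymp\delta^{-d}$. The resulting estimate is therefore $\bigl|\sum_{\boldsymbol\xi}\lambda_{\boldsymbol\xi}|Q(\boldsymbol\xi)|^p-\|Q\|_{L_p(\Bbb M)}^p\bigr|\le C\,\delta^{1-d}\|Q\|_{L_p(\Bbb M)}^p$, which does not become small as $\delta\to 0$ when $d\ge 2$, so choosing $\delta_0$ small no longer closes the two-sided equivalence. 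The standard repair (and essentially what Filbir--Mhaskar do) is to avoid the local sup altogether: use the $L_p$ Bernstein inequality for the gradient, $\|\,|\nabla Q|\,\|_{L_p(\Bbb M)}\lesssim n\|Q\|_{L_p(\Bbb M)}$, together with the chain rule and H\"older to get $\|\nabla(|Q|^p)\|_{L_1(\Bbb M)}\lesssim n\|Q\|_{L_p(\Bbb M)}^p$, and then apply the mean value theorem along geodesics from $\boldsymbol\xi$ to $y\in U_{\boldsymbol\xi}$; these geodesics stay inside balls of radius $O(\delta/n)$, whose overlap is bounded independently of $\delta$, yielding the needed error $C\delta\|Q\|_{L_p(\Bbb M)}^p$. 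Your $p=\infty$ argument is fine modulo the (global) gradient Bernstein estimate.
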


\subsection{Weighted least $\ell_p$ approximation and least squares
quadrature.
}\

For $1\le p\le\infty$, let $\mathcal{X}$ be an $L_p$-MZ family with associated weights $\tau$. We want to
 approximate $f$ using only these sampling nodes. In line with the method of Gr\"ochenig  in \cite{G},
  this can be done by solving the  weighted least $\ell_p$ approximation
problem with samples taken from the $n$-th layer $\mathcal{X}_n$
of $\mathcal{X}$. Therefore, we can define
  the
\emph{weighted least $\ell_p$ approximation} of $f\in C({\Bbb M})$
by
$$
          L_{n,p}^{\Bbb M}(f):=\arg\min\limits_{g\in\mathcal{P}_n}\,\left(\sum\limits_{k=1}^{N_n}\tau_{n,k}|f({\bf x}_{n,k})-g({\bf x}_{n,k})|^p\right)^{1/p}.
$$

Obviously, the solution of this problem exists for all $f\in
C(\Bbb M)$. Further, if $1< p<\infty$, then the solution is
unique. If $p=1$ or $\infty$, then the solution may be not unique
and we can choose any solution to be $ L_{n,p}^{\Bbb M}(f)$. It
can be seen that $L_{n,p}^{\Bbb M}$ is nonlinear except for the
case $p=2$.
 That is, for $p\neq2$, there exist
$f_1,f_2\in C({\Bbb M})$ such that
$$L_{n,p}^{\Bbb M}(f_1+f_2)\neq L_{n,p}^{\Bbb M}(f_1)+L_{n,p}^{\Bbb M}(f_2).$$ However,
 for  $f\in C({\Bbb M})$ and $Q\in\mathcal{P}_n$,  by the definition of $L_{n,p}^{\Bbb M}(f)$ we have
\begin{align}\label{2.4}
 \|f-L_{n,p}^{\Bbb M}(f)\|_{(p)}\notag &= \|(f-Q)-L_{n,p}^{\Bbb M}(f-Q)\|_{(p)}\notag\\&\le \|f-Q\|_{(p)}.
\end{align}This is a critical property. Besides, we have
\begin{equation}\label{2.5}
  L_{n,p}^{\Bbb M}(Q)=Q,\ {\rm for\ all}\ Q\in\mathcal{P}_n.
\end{equation}

Now we turn to consider the most interesting  case $p=2$ in which
$L_{n}^{\Bbb M}\equiv L_{n,2}^{\Bbb M}$ has many good properties
(see \cite{LW}). Indeed, $L_{n}^{\Bbb M}$ is a bounded linear
operator on $C({\Bbb M})$ satisfying that $(L_{n}^{\Bbb
M})^2=L_{n}^{\Bbb M}$, and the range of $L_{n}^{\Bbb M}$ is
$\mathcal{P}_n$. This means that $L_{n}^{\Bbb M}$ is a projection
on $\mathcal{P}_n$. If we define the \emph{weighted discretized
inner product} on $C({\Bbb M})$ by
$$\langle
f,g\rangle_{(2)}:=\sum\limits_{k=1}^{N_n}\tau_{n,k}f({\bf x}_{n,k})g({\bf x}_{n,k}),$$
then $L_{n}^{\Bbb M}$ is just the orthogonal projection onto $\mathcal{P}_n$
with respect to the discretized inner product $\langle
\cdot,\cdot\rangle_{(2)}$. Let $\varphi_k,k=1,\dots,\dim\mathcal{P}_n$ be  an orthonormal basis for $\mathcal{P}_n$
with respect to the weighted discrete scalar product $\langle\cdot,\cdot\rangle_{(2)}$.
Such $\varphi_k,k=1,\dots,{\rm dim}\mathcal{P}_n$ can be obtained by applying Gram-Schmidt orthogonalization
process to the basis $\{\phi_k:k=0,\dots,\dim\mathcal{P}_n-1\}$ of $\mathcal{P}_n$. We set
$$D_n({\bf x},{\bf y}):=\sum_{k=1}^{\dim\mathcal{P}_n}\varphi_k({\bf x})\varphi_k({\bf y}).$$
Then $D_n({\bf x},{\bf y})$ is the reproducing kernel of $\mathcal{P}_n$
with respect to the weighted discretized inner product $\langle\cdot,\cdot\rangle_{(2)}$ satisfying the following properties:

(i) For any ${\bf x},{\bf y}\in\Bbb M$, $D_n({\bf x},{\bf y})=D_n({\bf y},{\bf x})$;

(ii) For any fixed ${\bf y}\in\Bbb M$, $D_n(\cdot,{\bf y})\in\mathcal{P}_n$;

(iii) For any $Q\in\mathcal{P}_n$ and ${\bf x}\in\Bbb M$,
$$Q({\bf x})=\langle Q,D_n({\bf x},\cdot)\rangle_{(2)}=\sum_{k=1}^{N_n}\tau_{n,k}Q({\bf x}_{n,k})D_n({\bf x},{\bf x}_{n,k}).$$
Therefore,  for $f\in C({\Bbb M})$,
\begin{equation*}
  L_{n}^{\Bbb M}(f)({\bf x})=\langle f,
  D_n({\bf x},\cdot)\rangle_{(2)}=\sum_{k=1}^{N_n}\tau_{n,k}f({\bf x}_{n,k})D_n({\bf x},{\bf x}_{n,k}).
\end{equation*}
We call  $L_{n}^{\Bbb M}(f)$   the \emph{weighted least squares polynomial} and
$L_{n}^{\Bbb M}$ the \emph{weighted least squares operator}.

We also want to approximate the integral
  $\int_{\Bbb M} f{\rm d}\mu$ using only finite samples. To this end, Gr\"ochenig  in \cite{G} used the frame theory
  to construct   \emph{least squares  quadrature rules}
$$
I_n^{\Bbb M}(f)=\sum_{k=1}^{N_n}w_{n,k}f({\bf x}_{n,k})
$$ induced by an $L_{2}$-MZ
  family.  Lu and Wang in \cite{LW} proved that
$$w_{n,k}=\tau_{n,k}\int_{\Bbb M}D_n({\bf x},{\bf x}_{n,k}){\rm d}\mu({\bf x}), \ \ \ I_n^{\Bbb M}(f)=\int_{\Bbb M}L_{n}^{\Bbb M}(f)({\bf x}){\rm d}\mu({\bf x}), $$
and
\begin{equation}\label{2.7}\left|\int_{\Bbb M}f({\bf x}){\rm d}\mu({\bf x})-I_n^{\Bbb M}(f)\right|\le \left\|f-L_{n}^{\Bbb M}(f)\right\|_{L_2(\Bbb M)}.\end{equation}
\subsection{Auxiliary lemmas}
\

First we present the \emph{Nikolskii  inequality} on $\Bbb M$  proved in \cite{Mh1}.

\begin{prop}{\rm(\cite[Lemma 5.5]{Mh1}).}\label{prop2.1} Let $1\le p,q\le \infty$ and $Q\in \mathcal{P}_n$. Then
$$
\|Q\|_{L_{q}(\Bbb M)}\lesssim n^{d\left(\frac1p-\frac1q\right)_+}\|Q\|_{L_{p}(\Bbb M)}.
$$
\end{prop}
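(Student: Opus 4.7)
The plan is to split into the trivial regime $p\ge q$ and the substantive regime $p<q$, reduce the latter to the endpoint estimate at $q=\infty$, and derive that endpoint from the discretization result already recorded as Lemma~\ref{lem4.5}.

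First I would dispatch the case $p\ge q$ at once. Since $\mu$ is a probability measure, Hölder's inequality yields $\|Q\|_{L_q(\Bbb M)}\le\|Q\|_{L_p(\Bbb M)}$, which is exactly the claim because $\left(\tfrac1p-\tfrac1q\right)_+=0$ here.

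For $1\le p<q\le\infty$ the strategy is to first prove the endpoint inequality
$$\|Q\|_{L_\infty(\Bbb M)}\lesssim n^{d/p}\|Q\|_{L_p(\Bbb M)},\qquad Q\in\mathcal{P}_n,$$
and then obtain the intermediate exponents by Hölder-type interpolation. For the endpoint I would fix $\delta\in(0,\delta_0]$ and choose a maximal $\delta/n$-separated subset $\Xi\subset\Bbb M$ with the associated weights $\{\lambda_{\boldsymbol\xi}\}_{\boldsymbol\xi\in\Xi}$ provided by Lemma~\ref{lem4.5}, so that $\lambda_{\boldsymbol\xi}\asymp n^{-d}$ uniformly. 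Applying Lemma~\ref{lem4.5} twice — once with exponent $\infty$ and once with exponent $p$ — gives
$$\|Q\|_{L_\infty(\Bbb M)}^p\asymp\max_{\boldsymbol\xi\in\Xi}|Q(\boldsymbol\xi)|^p\le\sum_{\boldsymbol\xi\in\Xi}|Q(\boldsymbol\xi)|^p\asymp n^d\sum_{\boldsymbol\xi\in\Xi}\lambda_{\boldsymbol\xi}|Q(\boldsymbol\xi)|^p\asymp n^d\|Q\|_{L_p(\Bbb M)}^p,$$
which is the desired endpoint inequality after taking $p$-th roots.

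Finally, for $1\le p<q<\infty$, I would interpolate between $L_p$ and $L_\infty$:
$$\|Q\|_{L_q(\Bbb M)}^q=\int_{\Bbb M}|Q|^{q-p}|Q|^p\,{\rm d}\mu\le\|Q\|_{L_\infty(\Bbb M)}^{q-p}\|Q\|_{L_p(\Bbb M)}^p,$$
and substitute the endpoint bound $\|Q\|_{L_\infty(\Bbb M)}\lesssim n^{d/p}\|Q\|_{L_p(\Bbb M)}$ proved above; this yields $\|Q\|_{L_q(\Bbb M)}\lesssim n^{d(1/p-1/q)}\|Q\|_{L_p(\Bbb M)}$ as required. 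The only step that is not essentially bookkeeping is the endpoint inequality, and even there the work has been outsourced to Lemma~\ref{lem4.5}; the main conceptual point is simply to recognize that a positive-weight quadrature formula with weights of order $n^{-d}$ automatically encodes the Nikolskii inequality through the crude bound $\max\le\sum$. I do not anticipate any further obstacle beyond verifying that the constants depend only on $d$ and $p$, which is built into Lemma~\ref{lem4.5}.
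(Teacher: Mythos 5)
Your argument is correct. Note that the paper itself gives no proof of Proposition~\ref{prop2.1} at all --- it is imported verbatim as \cite[Lemma 5.5]{Mh1} --- so the comparison is really with the standard proof in that source, and your route is genuinely different. You derive the endpoint $\|Q\|_{L_\infty(\Bbb M)}\lesssim n^{d/p}\|Q\|_{L_p(\Bbb M)}$ from the Marcinkiewicz--Zygmund discretization of Lemma~\ref{lem4.5} via the crude bound $\max\le\sum$ and $\lambda_{\boldsymbol\xi}\asymp n^{-d}$, whereas the usual argument goes directly through the localized kernel $K_n$ already listed in Subsection 2.4: the reproducing property (iii) gives $|Q({\bf x})|\le\|Q\|_{L_p(\Bbb M)}\,\|K_n({\bf x},\cdot)\|_{L_{p'}(\Bbb M)}$ with $1/p+1/p'=1$, and combining $\|K_n({\bf x},\cdot)\|_{L_1(\Bbb M)}\le C$ (property (v)) with $\|K_n({\bf x},\cdot)\|_{L_\infty(\Bbb M)}\lesssim n^d$ (property (iv) at ${\rm dist}_{\Bbb M}({\bf x},{\bf y})=0$) yields $\|K_n({\bf x},\cdot)\|_{L_{p'}(\Bbb M)}\lesssim n^{d/p}$ by the same log-convexity of $L_s$-norms that you use for $Q$. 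Two remarks on your version. First, the threshold $\delta_0$ in Lemma~\ref{lem4.5} depends on the exponent, so to invoke the lemma on the same set $\Xi$ with both exponents $p$ and $\infty$ you should take $\delta\le\min\{\delta_0(d,p),\delta_0(d,\infty)\}$; harmless, but it should be said. Second, Lemma~\ref{lem4.5} is a substantially deeper fact than the Nikolskii inequality, and in the literature the MZ families on manifolds are themselves constructed using kernel estimates of exactly the kind that give Nikolskii directly, so your derivation inverts the usual order of dependence; it is logically admissible here because the paper treats both results as black boxes, but the kernel proof is shorter and self-contained within the tools of Subsection 2.4. The remaining steps --- H\"older for $p\ge q$ using that $\mu$ is a probability measure, and the interpolation $\|Q\|_{L_q(\Bbb M)}^q\le\|Q\|_{L_\infty(\Bbb M)}^{q-p}\|Q\|_{L_p(\Bbb M)}^p$ with the correct exponent bookkeeping --- are all fine.
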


Now we recall some knowledge about  filtered polynomial
approximation on  $\Bbb M$. In the sequel, we always assume that
$\eta \in C^\infty[0,\infty)$ is a ``$C^\infty$-filter'' such that
$\eta(t)=1$ if $t\in[0,1]$ and $\eta(t)=0$ if $t\ge2$. We will fix
$\eta$ for the rest of the paper, so that the dependence of
different constants on the choice of $\eta$ will not be mentioned.

For any positive integer $n$ and $\x,\y\in \Bbb M$,  we write
$$K_{n}({\bf x},{\bf y})\equiv K_{n,\eta}({\bf x},{\bf y})
=\sum_{k=0}^\infty \eta\left( \frac{\lambda_k}n\right)\phi_k({\bf x}){\phi_k({\bf y})}.$$
Then, $K_n({\bf x},{\bf y})$ satisfies the following properties (see e.g. \cite[Theorem 4.1]{MM} and \cite[Theorem 2.1]{FM1}):

(i) for any ${\bf x},{\bf y}\in\Bbb M$, $K_n({\bf x},{\bf y})=K_n({\bf y},{\bf x})$;

(ii) for any fixed ${\bf y}\in\Bbb M$, $K_n(\cdot,{\bf y})\in\mathcal{P}_{2n}$;

(iii) for any $Q\in\mathcal{P}_n$ and ${\bf x}\in\Bbb M$,
$$Q({\bf x})=\langle Q,K_n({\bf x},\cdot)\rangle_{L_2(\Bbb M)}=\int_{\Bbb M}Q({\bf y})K_n({\bf x},{\bf y}){\rm d}\mu({\bf y});$$

(iv) for any $\gamma>d$ and ${\bf x},{\bf y}\in\Bbb M$,
$$
 |K_{n}({\bf x},{\bf y})|\le \frac{cn^d}{\left(1+n\,{\rm dist}_{\Bbb M}({\bf x},{\bf y})\right)^\gamma};
$$

(v) for any ${\bf x}\in\Bbb M$,
$$
  \|K_{n}({\bf x},\cdot)\|_{L_1(\Bbb M)}\le C,
$$where $C>0$ depends only on $d$.

For $f\in L_1(\Bbb M)$, we define the \emph{filtered polynomial
approximation operator} $V_n$  by
$$
V_{n}(f)({\bf x}):= \langle f,
K_{n}({\bf x},\cdot)\rangle_{L_2(\Bbb M)}=\sum_{k:\,\lambda_k\le 2n} \eta\left( \frac{\lambda_k}n\right)\widehat{f}(k)\phi_k({\bf x}),
$$
Then, $V_n$ is a linear operator and satisfies the following properties (see e.g. \cite{MW}):

(i) for all $f\in L_1(\Bbb M)$, $V_n(f)\in\mathcal{P}_{2n}$;

(ii) for all $Q\in\mathcal{P}_n$, $Q=V_n(Q)$;

(iii) for all $1< p<\infty$,
\begin{equation}\label{2.10}
  \|V_n\|_{(p,p)}\le \|V_n\|_{(1,1)}=\|V_n\|_{(\infty,\infty)}=\max_{{\bf x}\in\Bbb M}\|K_{n}({\bf x},\cdot)\|_{L_1(\Bbb M)}\le C,
\end{equation}
where $C>0$ depends only on $d$, and $$\|V_n\|_{(p,p)}:=\sup_{\|f\|_{L_p(\Bbb M)}\le 1}\|V_n(f)\|_{L_p(\Bbb M)};$$

(iv) for all $1\le p\le\infty$,
\begin{equation}\label{2.11}
  \|f-V_n(f)\|_{L_{p}(\Bbb M)}\le (1+C)E_n(f)_{p}.
\end{equation}

It follows from \cite[Lemma 5.4]{Mh1} and the definition of the
Besov space $B_{p,\tau}^r(\Bbb M)$ that the Jackson inequality
\begin{equation}\label{2.12}
  E_n(f)_{p}\lesssim n^{-r}\|f\|_{X_{p}^{r}(\Bbb M)},
\end{equation}holds for $f\in X_{p}^r(\Bbb M)$, $1\le
p\le\infty$,  $0<\tau\le \infty$, and $r>0$, which,  combining with
\eqref{2.11}, we obtain
\begin{equation}\label{2.8}
  \|f-V_n(f)\|_{L_{p}(\Bbb M)}\lesssim n^{-r}\|f\|_{X_{p}^{r}(\Bbb M)}.
  \end{equation}This gives the convergence order for the filtered polynomial
approximation of a smooth function from $X_p^r(\Bbb M)$ measured
in the $L_p(\Bbb M)$ norm. We extend this estimate to more general
case as stated below.

\begin{prop}\label{prop2.3}Let $1\le p,q\le\infty$, $0<\tau\le\infty$, and $r>d(1/p-1/q)_+$. Then for
$f\in X_p^r(\Bbb M)$, we have
\begin{equation}\label{2.13}
  \|f-V_n(f)\|_{L_q(\Bbb M)}\lesssim n^{-r+d\left(\frac1p-\frac1q\right)_+}\|f\|_{X_{p}^{r}(\Bbb M)}.
\end{equation}
\end{prop}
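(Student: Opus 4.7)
The plan is to reduce this to the $L_p$-approximation estimate \eqref{2.8} by means of a dyadic telescoping of the filtered polynomials, converting each dyadic piece from $L_p$ to $L_q$ via the Nikolskii inequality of Proposition~\ref{prop2.1}. The case split is on whether $p\ge q$ or $p<q$.

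When $p\ge q$, the probability measure $\mu$ gives $\|g\|_{L_q(\Bbb M)}\le\|g\|_{L_p(\Bbb M)}$ for every $g\in L_p(\Bbb M)$, and $(1/p-1/q)_+=0$, so \eqref{2.8} itself is the claim. For $p<q$, I would fix $j_0\in\NN$ with $2^{j_0-1}<n\le 2^{j_0}$ and telescope
\[
f-V_n(f)=\bigl(V_{2^{j_0}}(f)-V_n(f)\bigr)+\sum_{j\ge j_0}\bigl(V_{2^{j+1}}(f)-V_{2^j}(f)\bigr).
\]
Each summand $V_{2^{j+1}}(f)-V_{2^j}(f)$ belongs to $\mathcal{P}_{2^{j+2}}$, and the head term $V_{2^{j_0}}(f)-V_n(f)$ to $\mathcal{P}_{4n}$.

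By \eqref{2.11} and monotonicity of best-approximation numbers, $\|V_{2^{j+1}}(f)-V_{2^j}(f)\|_{L_p(\Bbb M)}\lesssim E_{2^j}(f)_p$, and applying Proposition~\ref{prop2.1} lifts this to
\[
\|V_{2^{j+1}}(f)-V_{2^j}(f)\|_{L_q(\Bbb M)}\lesssim 2^{jd(1/p-1/q)}E_{2^j}(f)_p.
\]
The Jackson inequality \eqref{2.12} yields $E_{2^j}(f)_p\lesssim 2^{-jr}\|f\|_{X_p^r(\Bbb M)}$ in both the Sobolev and Besov scales; for the Besov quasi-norm one simply notes $2^{jr}E_{2^j}(f)_p\le\|f\|_{B_{p,\tau}^r(\Bbb M)}$, irrespective of $\tau$. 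Summing the resulting geometric series $\sum_{j\ge j_0}2^{-j(r-d(1/p-1/q))}$, which converges precisely because $r>d(1/p-1/q)$, delivers a bound of order $n^{-r+d(1/p-1/q)}\|f\|_{X_p^r(\Bbb M)}$. The head term $V_{2^{j_0}}(f)-V_n(f)$ is treated identically via the same $L_p$-bound plus Nikolskii and contributes at the same order, while absolute summability of the $L_q$ norms justifies the telescope a posteriori.

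The argument is essentially routine, being a Littlewood--Paley-type decomposition of the filtered approximation. The only technical point worth attention is treating the endpoints $p=\infty$ or $q=\infty$ uniformly (the Nikolskii inequality and \eqref{2.11} both cover them) and verifying the Besov estimate $2^{jr}E_{2^j}(f)_p\le\|f\|_{B_{p,\tau}^r(\Bbb M)}$ for all $0<\tau\le\infty$, both of which follow directly from the defining quasi-norm.
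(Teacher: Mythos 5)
Your proposal is correct and follows essentially the same route as the paper's proof: a dyadic (Littlewood--Paley type) decomposition of the filtered approximation, an $L_p\to L_q$ lift of each block via the Nikolskii inequality of Proposition~\ref{prop2.1}, the Jackson estimate \eqref{2.12}, and summation of the resulting geometric series under $r>d(1/p-1/q)_+$. The only cosmetic difference is that you telescope the operators $V_{2^j}$ directly (so the head term and tail are handled by the triangle inequality alone), whereas the paper decomposes $f$ into blocks $\sigma_j(f)$ and invokes the uniform $L_q$-boundedness \eqref{2.10} of $V_n$ to control $V_n(\sigma_j(f))$; both variants are equally valid.
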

\begin{proof}
For any $f\in X_p^r(\Bbb M)$, we  define
$$\sigma_1(f)=V_{1}(f), \ \ \sigma_j(f)=V_{2^{j-1}}(f)-V_{2^{j-2}}(f),\ \
{\rm for}\ j\ge2.$$Then $\sigma_j(f)\in\mathcal{P}_{2^{j}}$ and
$$f=\sum_{j=1}^\infty\sigma_j(f)$$ holds in the uniform norm. Meanwhile, \eqref{2.8} implies
\begin{align*}
  \|\sigma_j(f)\|_{L_{p}(\Bbb M)}&\le \|f-V_{2^{j-1}}(f)\|_{L_{p}(\Bbb M)}+\|f-V_{2^{j-2}}(f)\|_{L_{p}(\Bbb M)}
  \\&\lesssim 2^{-jr}\|f\|_{X_{p}^r(\Bbb M)}.
\end{align*}Therefore, applying Proposition \ref{prop2.1}, we deduce that
\begin{align}\label{2.14}
  \|\sigma_j(f)\|_{L_{q}(\Bbb M)}&\le 2^{jd\left(\frac1p-\frac1q\right)_+}\|\sigma_j(f)\|_{L_{p}(\Bbb M)}\notag
  \\&\lesssim \left(2^{-j}\right)^{r-d\left(\frac1p-\frac1q\right)_+}\|f\|_{X_{p}^r(\Bbb M)}.
\end{align}

 Given a positive integer $n$, choose a integer $m$ such that $2^{m}\le n<2^{m+1}$.
We recall that for all $f\in X_p^r(\Bbb M)$, the series
$\sum_{j=1}^\infty\sigma_j(f)$ converges to $f$ in the uniform
norm.
 Thus, since $V_n(Q)=Q$ for all $Q\in\mathcal{P}_n$, we obtain
$$
  f-V_n(f)=\sum_{j=m+1}^\infty\left(\sigma_j(f)-V_n(\sigma_j(f))\right),
$$which leads to
\begin{align*}
  \|f-V_n(f)\|_{L_q(\Bbb M)}&\le\sum_{j=m+1}^\infty\left\{\|\sigma_j(f)\|_{L_q(\Bbb M)}+\|V_n(\sigma_j(f))\|_{L_q(\Bbb M)}\right\}.
\end{align*}
Then, by \eqref{2.10} and \eqref{2.14}, this implies
\begin{align*}
  \|f-V_n(f)\|_{L_q(\Bbb M)}&\lesssim\sum_{j=m+1}^\infty\|\sigma_j(f)\|_{L_q(\Bbb M)}
  \\&\lesssim\sum_{j=m+1}^\infty \left(2^{-j}\right)^{r-d\left(\frac1p-\frac1q\right)_+}\|f\|_{X_{p}^r(\Bbb M)}
  \\&\asymp n^{r-d\left(\frac1p-\frac1q\right)_+}\|f\|_{X_{p}^r(\Bbb M)},
\end{align*}which gives the estimates \eqref{2.13}.
\end{proof}
The  following lemma is critical in the proof of Theorem
\ref{thm1.1}, which gives  regularity  condition of a discrete
measure.

\begin{lem}{\rm (\cite[Theorem 5.5]{FM} and \cite[Lemma 7]{MW}).}\label{lem2.1}
Let $\Xi$ be a finite set of points in
$\Bbb M$, and let $\Lambda=\{\lambda_{\boldsymbol\xi
}\}_{{\boldsymbol\xi
}\in\Xi}$ be a finite set of positive numbers. If there exists $1\le p_0<\infty$ such that
\begin{equation}\label{2.15}
\sum_{\boldsymbol\xi\in\Xi}\lambda_{\boldsymbol\xi
}|Q({\boldsymbol\xi})|^{p_0}\le
C_1\int_{\Bbb M}|Q({\bf x})|^{p_0}{\rm d}\mu({\bf x}),
\end{equation}holds for all $Q\in \mathcal{P} _{n}$ with $C_1>0$,
then the following \textbf{regularity
condition }
\begin{equation}\label{2.16}\sum_{\boldsymbol\xi\in\Xi\cap{\rm B}\left({\bf y},\frac1n\right)}\lambda_{\boldsymbol\xi
}
 \le C_1C_2\int_{{\rm B}\left({\bf y},\frac1n\right)}{\rm d}\mu({\bf x}),\ {\rm for\ all}\ {\bf y}\in\Bbb M.\end{equation} holds,
 where $C_2>0$ depends only on $d$ and  $p_0$.

Conversely, if  the regularity condition \eqref{2.16} holds,
then for any positive integer $m\geq n$ and $1\le p<\infty$ we have
$$\sum_{\boldsymbol\xi\in\Xi}\lambda_{\boldsymbol\xi
}|Q({\boldsymbol\xi})|^{p}\le
C_1C_2C_3\left(\frac{m}{n}\right)^{d}\int_{\Bbb M}|Q({\bf x})|^{p}{\rm d}\mu({\bf x}),$$holds for all $Q\in \mathcal{P} _{m}$ with $C_3>0$ depending only on  $d$ and $p$.
\end{lem}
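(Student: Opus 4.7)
The statement is a two-sided equivalence between a one-sided polynomial sampling inequality and a local density (regularity) condition on the weighted discrete measure $\sum_{\boldsymbol\xi}\lambda_{\boldsymbol\xi}\delta_{\boldsymbol\xi}$. In both directions the plan is to use carefully chosen test polynomials constructed from the localized filtered kernel $K_n(\cdot,\cdot)$ of Subsect.~2.4, exploiting the pointwise decay $|K_n({\bf x},{\bf y})|\lesssim n^d(1+n\,{\rm dist}_{\Bbb M}({\bf x},{\bf y}))^{-\gamma}$, the on-diagonal asymptotics $K_n({\bf y},{\bf y})\asymp n^d$, and the reproduction formula $Q=V_nQ$ on $\mathcal{P}_n$.

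For the forward implication, fix ${\bf y}\in\Bbb M$ and consider the ``polynomial bump''
\[
\psi_{n,{\bf y}}({\bf x}) := \Bigl(K_{n/4}({\bf x},{\bf y})/K_{n/4}({\bf y},{\bf y})\Bigr)^{2}\in\mathcal{P}_n.
\]
A short Bernstein/Taylor argument combined with the on-diagonal value gives $\psi_{n,{\bf y}}({\bf x})\ge c_0>0$ for all ${\bf x}\in{\rm B}({\bf y},1/n)$, while integrating the $\gamma$-decay bound for $\gamma$ large enough yields $\|\psi_{n,{\bf y}}\|_{L_{p_0}(\Bbb M)}^{p_0}\lesssim n^{-d}\asymp \mu({\rm B}({\bf y},1/n))$. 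Substituting $Q=\psi_{n,{\bf y}}$ into the hypothesis \eqref{2.15} and discarding the terms with $\boldsymbol\xi\notin{\rm B}({\bf y},1/n)$ on the left produces
\[
c_0\sum_{\boldsymbol\xi\in\Xi\cap{\rm B}({\bf y},1/n)}\lambda_{\boldsymbol\xi}\;\le\;C_1\|\psi_{n,{\bf y}}\|_{L_{p_0}(\Bbb M)}^{p_0}\;\lesssim\;C_1\,\mu\bigl({\rm B}({\bf y},1/n)\bigr),
\]
which is \eqref{2.16} with $C_2$ depending only on $d$ and $p_0$.

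For the converse direction, the key auxiliary step is a Bernstein-type mean-value inequality: there exists $C>0$ depending only on $d$ and $p$ such that for every $m\ge 1$, every $Q\in\mathcal{P}_m$ and every ${\bf z}\in\Bbb M$,
\[
|Q({\bf z})|^p\;\le\;\frac{C}{\mu({\rm B}({\bf z},1/m))}\int_{{\rm B}({\bf z},1/m)}|Q({\bf x})|^p\,d\mu({\bf x}),
\]
which again follows from $Q=V_mQ$ and the decay of $K_m$. Applying this at ${\bf z}=\boldsymbol\xi$, multiplying by $\lambda_{\boldsymbol\xi}$, summing over $\boldsymbol\xi\in\Xi$, and interchanging the order of summation and integration yields
\[
\sum_{\boldsymbol\xi\in\Xi}\lambda_{\boldsymbol\xi}|Q(\boldsymbol\xi)|^p\;\lesssim\;\int_{\Bbb M}|Q({\bf x})|^p\cdot m^d\Bigl(\sum_{\boldsymbol\xi\in\Xi\cap{\rm B}({\bf x},1/m)}\lambda_{\boldsymbol\xi}\Bigr)d\mu({\bf x}).
\]
Since $m\ge n$ we have ${\rm B}({\bf x},1/m)\subset{\rm B}({\bf x},1/n)$, so by the regularity condition \eqref{2.16} the inner sum is at most $C_1C_2\mu({\rm B}({\bf x},1/n))\asymp C_1C_2 n^{-d}$; multiplying by $m^d$ produces exactly the factor $(m/n)^d$ and the required inequality. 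The main technical obstacles are the pointwise lower bound $\psi_{n,{\bf y}}\gtrsim 1$ on ${\rm B}({\bf y},1/n)$ and the Bernstein mean-value inequality, both of which are well developed in \cite{FM, MW} within the filtered-kernel framework of Subsect.~2.4, so the proposed proof essentially consists of assembling these ingredients while carefully tracking the dependence of the constants on $C_1$ and on $(m/n)^d$.
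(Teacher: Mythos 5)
First, note that the paper does not actually prove Lemma \ref{lem2.1}: it quotes it from \cite[Theorem 5.5]{FM} and \cite[Lemma 7]{MW} and explicitly remarks that it is ``essentially the same as \cite[Lemma 7]{MW}''. So your proposal has to be judged against the standard arguments in those references, and measured that way it contains two genuine gaps, one repairable and one fatal as written.

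In the forward direction, your test function $\psi_{n,{\bf y}}=(K_{n/4}(\cdot,{\bf y})/K_{n/4}({\bf y},{\bf y}))^2$ need not lie in $\mathcal{P}_n$: on a general compact Riemannian manifold the product of two diffusion polynomials is not a diffusion polynomial (products of Laplace--Beltrami eigenfunctions are not finite combinations of eigenfunctions), so \eqref{2.15} cannot be applied to $\psi_{n,{\bf y}}$. This is repairable: since \eqref{2.15} already carries $|Q|^{p_0}$, no nonnegativity is needed, and one can take $Q=K_{n/2}(\cdot,{\bf y})\in\mathcal{P}_n$ directly, using the near-diagonal lower bound $|K_m({\bf x},{\bf y})|\gtrsim m^d$ for $\mathrm{dist}_{\mathbb{M}}({\bf x},{\bf y})\le\delta/m$ (which itself requires a Bernstein gradient estimate, and then a covering of ${\rm B}({\bf y},1/n)$ by $O(1)$ balls of radius $\delta/n$ to reach the full ball in \eqref{2.16}).

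The converse direction is the serious problem. The ``Bernstein-type mean-value inequality''
\[
|Q({\bf z})|^p\le\frac{C}{\mu({\rm B}({\bf z},1/m))}\int_{{\rm B}({\bf z},1/m)}|Q({\bf x})|^p\,d\mu({\bf x}),\qquad Q\in\mathcal{P}_m,
\]
on which your whole argument rests, is false with $C$ independent of $Q$ and $m$, already for trigonometric polynomials on the circle: take $Q(\theta)=(1-D^2\cos^2\theta)^{D/2}$ of degree $D$ and ${\bf z}=\pi/2$; then $Q(\pi/2)=1$ while $Q$ decays like $e^{-D^3s^2/2}$ in $s=\theta-\pi/2$, so $D\int_{|s|\le 1/D}|Q|^p\,ds\asymp (pD)^{-1/2}\to 0$. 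It also does not ``follow from $Q=V_mQ$ and the decay of $K_m$'': the reproducing integral runs over all of $\mathbb{M}$, and the far-field contribution cannot be discarded. The correct route (the one in \cite{FM,MW}) keeps the global integral but weights it by the kernel: by Jensen/H\"older against the measure $|K_m(\boldsymbol\xi,{\bf x})|\,d\mu({\bf x})$ of bounded total mass, $|Q(\boldsymbol\xi)|^p\lesssim\int_{\mathbb{M}}|K_m(\boldsymbol\xi,{\bf x})||Q({\bf x})|^p\,d\mu({\bf x})$; summing over $\boldsymbol\xi$ and interchanging, it remains to show $\sup_{\bf x}\sum_{\boldsymbol\xi}\lambda_{\boldsymbol\xi}|K_m(\boldsymbol\xi,{\bf x})|\lesssim C_1C_2(m/n)^d$, which follows by splitting into dyadic annuli $2^j/m\le\mathrm{dist}_{\mathbb{M}}(\boldsymbol\xi,{\bf x})<2^{j+1}/m$, using the $\gamma$-decay of $K_m$ with $\gamma>d$, and applying \eqref{2.16} upgraded by covering to $\sum_{\boldsymbol\xi\in{\rm B}({\bf x},R)}\lambda_{\boldsymbol\xi}\lesssim C_1C_2\max\{R,1/n\}^d$ for all $R>0$. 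Your final bookkeeping (the factor $m^d\cdot n^{-d}$) is the right shape, but the step that produces the localization has to be the kernel-weighted one, not a pointwise local mean-value inequality.
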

We remark that  Lemma \ref{lem2.1} is essentially the same as \cite[Lemma 7]{MW} although there is a little different of their form.

\section{Proofs of main results}\label{sect3}

Theorem \ref{thm1.2} follows from  Theorem \ref{rem1.1} and \eqref{2.7} straightly. Thus we only need to prove Theorems \ref{thm1.1} and \ref{rem1.1}.
For this purpose, we give the following lemma which implies the above theorems immediately.

\begin{lem}\label{thm3.3} Let $1\le p,q,t\le\infty$, $0<\tau\le\infty$, and $r>d\max\{1/p,1/t\}$. Suppose that $\mathcal{X}$ is an $L_{t}$-MZ
family on $\Bbb M$ with associated weights $\tau$ and global
condition number $\kappa$, and $L_{n,t}^{\Bbb M}$ is the
corresponding weighted least $\ell_t$ approximation. If $f\in
X_{p}^r(\Bbb M)$, then we have for $1\le t<\infty$,
\begin{equation}\label{3.1}
  \|f-L_{n,t}^{\Bbb M}(f)\|_{L_{q}(\Bbb M)}\le C(1+\kappa^{1/t})\,n^{-r+d\left[\left(\frac1p-\frac1t\right)_++\left(\frac1t-\frac1q\right)_+\right]}\|f\|_{X_{p}^r(\Bbb M)},
\end{equation}
and for $t=\infty$,
\begin{equation}\label{3.2}
  \|f-L_{n,\infty}^{\Bbb M}(f)\|_{L_{q}(\Bbb M)}\le C(1+\kappa)\,n^{-r+d/p}\|f\|_{X_{p}^r(\Bbb M)},
\end{equation}where $C>0$ is independent of $n$, $\kappa$, $f$, or
$\mathcal{X}$. Specifically, if $r>d\max\{1/p,1/t\}$ and $t$ is
between $p$ and $q$,  then we have
\begin{equation*}
  \|f-L_{n,t}^{\Bbb M}(f)\|_{L_{q}(\Bbb M)}\le C(1+\kappa^{1/t})\,n^{-r+d(1/p-1/q)_+}\|f\|_{X_{p}^r(\Bbb
  M)}.
\end{equation*}
\end{lem}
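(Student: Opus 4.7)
My strategy is to compare $L_{n,t}^{\Bbb M}(f)$ against the filtered polynomial $Q:=V_m(f)\in\mathcal P_{2m}\subseteq\mathcal P_n$, where $m=\lfloor n/2\rfloor$, and to reduce the control of the discrete remainder $\|f-Q\|_{(t)}$ to the regularity of the measure $\mu_n$ provided by Lemma \ref{lem2.1}. The starting point is the triangle splitting
\begin{equation*}
\|f-L_{n,t}^{\Bbb M}(f)\|_{L_q(\Bbb M)}\le \|f-Q\|_{L_q(\Bbb M)}+\|Q-L_{n,t}^{\Bbb M}(f)\|_{L_q(\Bbb M)}.
\end{equation*}
Proposition \ref{prop2.3} handles the first summand with the rate $n^{-r+d(1/p-1/q)_+}\|f\|_{X_p^r(\Bbb M)}$, which is absorbed by the final bound since $(1/p-1/q)_+\le(1/p-1/t)_+ +(1/t-1/q)_+$.

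For the second summand in the case $1\le t<\infty$, I would observe that $Q-L_{n,t}^{\Bbb M}(f)\in\mathcal P_n$ and apply, in order, Nikolskii (Proposition \ref{prop2.1}) to pass from $L_q$ to $L_t$ at cost $n^{d(1/t-1/q)_+}$, the lower bound in the $L_t$-MZ inequality \eqref{2.2} to pass to the discrete norm $\|\cdot\|_{(t)}$ at cost $A^{-1/t}$, and the triangle inequality together with the minimality property \eqref{2.4} to obtain $\|Q-L_{n,t}^{\Bbb M}(f)\|_{(t)}\le 2\|f-Q\|_{(t)}$. The factor $\kappa^{1/t}=(B/A)^{1/t}$ emerges at this point, and the argument is reduced to proving
\begin{equation*}
\|f-V_m(f)\|_{(t)}\lesssim B^{1/t}\,n^{-r+d(1/p-1/t)_+}\|f\|_{X_p^r(\Bbb M)}.
\end{equation*}

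This last estimate is the main obstacle, because $f-V_m(f)$ is not a polynomial, so MZ is inapplicable. To overcome it I would use the dyadic decomposition $f=\sum_{j\ge 1}\sigma_j(f)$ from the proof of Proposition \ref{prop2.3}. Since $V_m(\sigma_j(f))=\sigma_j(f)$ whenever $2^j\le m$, only the high-frequency tail survives:
\begin{equation*}
f-V_m(f)=\sum_{2^j>m}\psi_j,\qquad \psi_j:=\sigma_j(f)-V_m(\sigma_j(f))\in\mathcal P_{2^{j+1}}.
\end{equation*}
The upper $L_t$-MZ inequality is precisely the hypothesis \eqref{2.15} of Lemma \ref{lem2.1} (with $p_0=t$), so its converse applied to $\psi_j$, whose degree $2^{j+1}$ satisfies $2^{j+1}\ge n$, yields $\|\psi_j\|_{(t)}\lesssim B^{1/t}(2^j/n)^{d/t}\|\psi_j\|_{L_t(\Bbb M)}$. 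Combining with Nikolskii, the uniform boundedness of $V_m$ from \eqref{2.10}, and $\|\sigma_j(f)\|_{L_p(\Bbb M)}\lesssim 2^{-jr}\|f\|_{X_p^r(\Bbb M)}$, the sum over $2^j>m$ becomes a geometric series in $j$ with ratio $2^{d/t+d(1/p-1/t)_+-r}$. The exponent is negative exactly under the hypothesis $r>d\max\{1/p,1/t\}$, so the series is controlled by its leading term (the one with $2^j\asymp n$), which produces the desired bound.

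The case $t=\infty$ is easier: using $\|f-V_m(f)\|_{(\infty)}\le\|f-V_m(f)\|_{L_\infty(\Bbb M)}\lesssim n^{-r+d/p}\|f\|_{X_p^r(\Bbb M)}$ from Proposition \ref{prop2.3} with $q=\infty$ (which is where the condition $r>d/p$ enters), and combining this with the $L_\infty$-MZ bound \eqref{2.3} together with $\|\cdot\|_{L_q(\Bbb M)}\le\|\cdot\|_{L_\infty(\Bbb M)}$ (valid since $\mu$ is a probability measure), yields the bound with a linear dependence on $\kappa=1/A$. Finally, the specialization in the concluding sentence is a direct arithmetic consequence of the identity $(1/p-1/t)_+ +(1/t-1/q)_+=(1/p-1/q)_+$ whenever $t$ lies between $p$ and $q$.
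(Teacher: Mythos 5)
Your proposal is correct and follows essentially the same route as the paper's proof: the same triangle splitting against a filtered polynomial in $\mathcal P_n$, the same chain Nikolskii $\to$ lower MZ bound $\to$ minimality of $L_{n,t}^{\Bbb M}$, and the same dyadic decomposition combined with the converse part of Lemma \ref{lem2.1} to control the discrete tail, with the geometric series converging exactly under $r>d\max\{1/p,1/t\}$. The only cosmetic difference is your choice $m=\lfloor n/2\rfloor$ (forcing the extra, harmless $V_m(\sigma_j(f))$ terms handled by \eqref{2.10}) where the paper takes $V_{2^{s-1}}$ so that the telescoping $f-V_{2^{s-1}}(f)=\sum_{j\ge s+1}\sigma_j(f)$ is exact.
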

\begin{proof}
We keep the notations $V_j$ and $\sigma_j$ as in the proof of
Proposition \ref{prop2.3}. For a positive integer $n$, let  $s$ be
an integer such that $2^s\le n<2^{s+1}$. Observe that for any
$f\in X_{p}^r(\Bbb M)$ with $r>d/p$,
\begin{align}\label{3.3}
  \|f-L_{n,t}^{\Bbb M}(f)\|_{L_{q}(\Bbb M)}&\le
  \|f-V_{2^{s-1}}(f)\|_{L_{q}(\Bbb M)}+\|L_{n,t}^{\Bbb M}(f)-V_{2^{s-1}}(f)\|_{L_{q}(\Bbb M)}\notag
  \\&=:{\rm I}+{\rm II}.
\end{align}
To estimate  ${\rm I}=\|f-V_{2^{s-1}}(f)\|_{L_{q}(\Bbb M)}$, we
can use \eqref{2.13} to obtain
\begin{align}\label{3.4}
  {\rm I}\ &\lesssim   \left(2^{s-1}\right)^{-r+d\left(\frac1p-\frac1q\right)_+}\|f\|_{X_{p}^r(\Bbb M)}\notag
  \\&\lesssim n^{-r+d\left(\frac1p-\frac1q\right)_+}\|f\|_{X_{p}^r(\Bbb M)}.
\end{align}
To estimate $\text{II} = \|L_{n,t}^{\Bbb
M}(f)-V_{2^{s-1}}(f)\|_{L_q(\Bbb M)}$, we can use Proposition
\ref{prop2.1}, \eqref{2.2}, and \eqref{2.4} to obtain that for
$1\le t<\infty$,
\begin{align}
  {\rm II}\ &\lesssim n^{d(\frac1t-\frac1q)_+}\|L_{n,t}^{\Bbb M}(f)-V_{2^{s-1}}(f)\|_{L_{t}(\Bbb M)}
  \notag\\&\le A^{-1/t}n^{d(\frac1t-\frac1q)_+}\|L_{n,t}^{\Bbb M}(f)-V_{2^{s-1}}(f)\|_{(t)}
  \notag\\&\le A^{-1/t}n^{d(\frac1t-\frac1q)_+}\{\|L_{n,t}^{\Bbb M}(f)-f\|_{(t)}+\|f-V_{2^{s-1}}(f)\|_{(t)}\}
  \notag\\&\le 2 A^{-1/t}n^{d(\frac1t-\frac1q)_+}\|f-V_{2^{s-1}}(f)\|_{(t)},\label{3.5}
\end{align}where we use the fact that $L_{n,t}^{\Bbb M}(f)-V_{2^{s-1}}(f)\in \mathcal{P}_n$.
Since for all $f\in X_{p}^r(\Bbb M)$ with $r>d/p$, the series $\sum_{j=s+1}^\infty\sigma_j(f)$ converges to $f-V_{2^{s-1}}(f)$ in the uniform norm, we have
 \begin{equation}\label{3.6}
 \|f-V_{2^{s-1}}(f)\|_{(t)}\le \sum\limits_{j=s+1}^\infty\|
  \sigma_j(f)\|_{(t)}.
 \end{equation}
We need to estimate $\|\sigma_j(f)\|_{(t)}$ for $j \geq s+1$. Clearly, \eqref{2.2} implies that
\eqref{2.15} is valid for sampling points $\{{\bf x}_{n,k}\}_{k=1}^{N_n}$ with weights $\{\tau_{n,k}\}_{k=1}^{N_n}$, $p_0=t$, and $C_1=B$.
We have already observed that $\sigma_j(f) \in \mathcal{P}_{2^j}$
and $2^j \geq n$ for $j \geq s+1$. Therefore, based on Lemma \ref{lem2.1}, Proposition \ref{prop2.1}, and \eqref{2.14}, we can conclude that
 \begin{align*}
\| \sigma_j(f) \|_{(t)}^t &= \sum_{k=1}^{N_n} \tau_{n,k} | \sigma_j(f)(\mathbf{x}_{n,k})|^t
\\&\lesssim B\left(\frac{2^j}{n}\right)^d \| \sigma_j(f) \|_{L_t(\mathbb{M})}^t \\
&\lesssim B\left(\frac{2^j}{n}\right)^d (2^j)^{td\left(\frac{1}{p}-\frac{1}{t}\right)_+} \| \sigma_j(f) \|_{L_p(\mathbb{M})}^t \\
&\lesssim B\left(\frac{2^j}{n}\right)^d (2^j)^{-rt+td\left(\frac{1}{p}-\frac{1}{t}\right)_+} \|f\|_{X_p^r(\mathbb{M})}^t \\
&= Bn^{-d}(2^j)^{-t\left\{r-d\left[\left(\frac{1}{p}-\frac{1}{t}\right)_++\frac{1}{t}\right]\right\}} \|f\|_{X_p^r(\mathbb{M})}^t,
\end{align*}i.e.,
\begin{equation}\label{3.7}
  \| \sigma_j(f)\|_{(t)}\lesssim B^{1/t}n^{-d/t}(2^{j})^{-r+d\,\left[\left(\frac1p-\frac1t\right)_++\frac1t\right]}\|f\|_{X_{p}^r(\Bbb M)}.
\end{equation}
Hence, for $r>d\max\{1/p,1/t\}$, we obtain using \eqref{3.5}, \eqref{3.6}, and
\eqref{3.7} that
\begin{align}\label{3.8}
  &{\rm II}\ \lesssim A^{-1/t}n^{d\,(\frac1t-\frac1q)_+}\sum\limits_{j=s+1}^\infty\| \sigma_j(f)\|_{(t)}\nonumber
  \\&\quad\lesssim \kappa^{1/t}n^{d\,(\frac1t-\frac1q)_+} n^{-d/t}
  \sum\limits_{j=s+1}^\infty (2^{j})^{-r+d\,\left[(\frac1p-\frac1t)_++\frac1t\right]}\|f\|_{X_{p}^r(\Bbb M)}\nonumber
  \\&\quad\lesssim\kappa^{1/t}n^{-r+d\,\left[(\frac1p-\frac1t)_++(\frac1t-\frac1q)_+\right]}\|f\|_{X_{p}^r(\Bbb M)}.
\end{align}
Combining  \eqref{3.3}, \eqref{3.4}, with \eqref{3.8}, we obtain
\eqref{3.1}.

For  $t=\infty$, by Proposition \ref{prop2.1} and \eqref{2.14} we
have  for $r>d/p$,
\begin{equation}\label{3.9}
  \| \sigma_j(f)\|_{(\infty)}\le \| \sigma_j(f)\|_{L_\infty(\Bbb M)}
  \lesssim (2^{j})^{d/p}\|\sigma_j (f)\|_{L_{p}(\Bbb M)}\lesssim (2^{j})^{-r+d/p}\|f\|_{X_{p}^r(\Bbb M)},
\end{equation}and
$$
\|L_{n,\infty}^{\Bbb M}(f)-V_{2^{s-1}}(f)\|_{(\infty)}\lesssim\sum\limits_{j=s+1}^\infty\|
  \sigma_j(f)\|_{(\infty)}.
$$
Together with \eqref{2.3} and \eqref{3.9}, this will lead to
\begin{align}\label{3.10}
  {\rm II}\,&\le\|L_{n,\infty}^{\Bbb M}(f)-V_{2^{s-1}}(f)\|_{L_\infty(\Bbb M)}\notag
      \\&\le \kappa\,\|L_{n,\infty}^{\Bbb M}(f)-V_{2^{s-1}}(f)\|_{(\infty)}\notag
  \\&\lesssim \kappa\,\sum\limits_{j=s+1}^\infty (2^{j})^{-r+d/p}\|f\|_{X_{p}^r(\Bbb M)}\notag
  \\&\lesssim \kappa\, n^{-r+d/p}\|f\|_{X_{p}^r(\Bbb M)}.
\end{align}Therefore, from \eqref{3.3}, \eqref{3.4}, and \eqref{3.10}, we get
\begin{align*}
  \|f-L_{n,\infty}^{\Bbb M}(f)\|_{L_{q}(\Bbb M)}\notag&\le
  \|f-V_{2^{s-1}}(f)\|_{L_{q}(\Bbb M)}+\|L_{n,\infty}^{\Bbb M}(f)-V_{2^{s-1}}(f)\|_{L_{\infty}(\Bbb M)}
  \\&\lesssim n^{-r+d\left(\frac1p-\frac1q\right)_+}\|f\|_{X_{p}^r(\Bbb M)}+\kappa n^{-r+d/p}\|f\|_{X_{p}^r(\Bbb M)}
  \\&\lesssim  (1+\kappa) n^{-r+d/p}\|f\|_{X_{p}^r(\Bbb M)},
\end{align*}which proves \eqref{3.2}.

This proof of Lemma \ref{thm3.3} is now finished.
\end{proof}

\section{Optimality}\label{sect4}

This section is devoted to elaborating the optimality of the
obtained results in the previous sections by giving  sharp
estimates of sampling numbers and optimal quadrature errors.

\subsection{Optimal recovery and quadrature}\label{sub4.1}
\

First we  recall some basic concepts concerning with optimal
recovery and quadrature.

Let $1\le p,q\le\infty$, $0<\tau\le\infty$, and $r>d/p$. We denote
by $BX_p^r(\Bbb M)$ the unit ball of $X_p^r(\Bbb M)$. Consider the
problem of recovering a function $f$ from $BX_p^r(\Bbb M)$
measured in $L_q(\Bbb M)$ using only finitely many function
values. The minimal worst case error that can be achieved by a
given  finite set of points $P\subset \Bbb M$ with the cardinality
$\#P$  is defined by
$$
 e(P;\,BX_p^r(\Bbb M), L_q(\Bbb M)):=\inf_{A_P}\sup_{f\in BX_p^r(\Bbb M)}\|f-A_P(f)\|_{L_q(\Bbb M)},
$$where the infimum is taking over all sampling operators of the form
 $$A_P:BX_p^r(\Bbb M)\to L_q(\Bbb M),\ \ \ A_P(f):=\varphi\left(f|_P\right),$$ and $\varphi$ is an arbitrary mapping from $\Bbb R^{\#P}$ to
$L_q(\Bbb M)$. Note that
$f|_P=\left(f(\boldsymbol\xi)\right)_{\boldsymbol\xi\in P}$ is the
restriction of $f$ to the point set $P$. Sometimes it may be
preferable to all only linear mappings $$A_P^{\rm
lin}(f)=\varphi^{\rm lin}(f|_P)=\sum_{\boldsymbol\xi\in
P}f(\boldsymbol\xi)\,a_{\boldsymbol\xi},\ \ a_{\boldsymbol\xi}\in
L_q(\Bbb M),
$$in which we write $e^{\rm lin}$ instead of $e$. Furthermore, if
we impose a constraint on the cardinality of sampling points $P$,
allowing at most $N$ points, we can define the \emph{$N$-th
sampling number (or the optimal recovery)} of $BX_p^r(\Bbb M)$
measured in the $L_q(\Bbb M)$ norm by
$$g_N(BX_p^r(\Bbb M),L_q(\Bbb M)) \ :=\inf_{\#P\le N}
e(P;\,BX_p^r(\Bbb M),L_q(\Bbb M)).
$$The \emph{$N$-th linear  sampling number} $g_N^{\rm lin}(BX_p^r(\Bbb M),L_q(\Bbb M))$ can be defined similarly.

Consider  the problem of approximating integration  of a function
$f$ from $BX_p^r(\Bbb M)$. The minimal worst case error of
$BX_p^r(\Bbb M)$ that can be achieved by a given  finite set of
points $P\subset\Bbb M$ is denoted by
$$e(P;\,BX_p^r(\Bbb M),{\rm INT}):=\inf_{A_P}\sup_{f\in BX_p^r(\Bbb M)}\left|\int_{\Bbb M}f({\bf x}){\rm d}\mu({\bf x})-A_P(f)\right|,
$$where  the infimum ranges over all sampling operators of the form
 $$A_P:BX_p^r(\Bbb M)\to \Bbb R,\ \ \ A_P(f):=\varphi\left(f|_P\right).$$
  The \emph{$N$-th optimal quadrature error} of  $BX_p^r(\Bbb M)$ are defined by
$$e_N(BX_p^r(\Bbb M),{\rm INT}):=\inf_{\#P\le N}
e(P;\,BX_p^r(\Bbb M),{\rm INT}),
$$ where the infimum is taken over all sampling points $P\subset\Bbb M$ with at most $N$ points. Note that  $BX_p^r(\Bbb M)$ is convex and balanced,
the infimum will not change when we only minimize over all linear
mappings $\varphi^{\rm lin}$. Therefore, it is no restriction to
assume that $A_P$ is of the form
$$A_P^{\rm lin}(f)=\sum_{\boldsymbol\xi\in P}\lambda_{\boldsymbol\xi}f({\boldsymbol\xi}),\ \lambda_{\boldsymbol\xi}\in\Bbb R.$$
 This is a classical result by Smolyak and Bakhvalov (see, e.g. \cite[Theorem 4.7]{NW}).

It is well known (see e.g \cite{TWW}) that
\begin{equation}\label{4.1}
  g_N^{\rm lin}(BX_p^r(\Bbb M),L_q(\Bbb M))\ge g_N(BX_p^r(\Bbb M),L_q(\Bbb
  M)),
\end{equation}and
\begin{align}\label{4.2}
 g_N(BX_p^r(\Bbb M),L_q(\Bbb M))
\ge\inf_{\substack{\boldsymbol\xi_1,\dots,\boldsymbol\xi_N\in \Bbb M}}\sup_{\substack{f\in BX_p^r(\Bbb M)\\
  f(\boldsymbol\xi_1)=\dots=f(\boldsymbol\xi_N)=0}}\|f\|_{L_q(\Bbb
  M)}.
\end{align}
   Meanwhile, it is evident (see e.g \cite{TWW}) that
 \begin{equation}\label{4.3}
   e_N(BX_p^r(\Bbb M),{\rm INT})\ge \inf_{\substack{\boldsymbol\xi_1,\dots,\boldsymbol\xi_N\in \Bbb M}}
   \sup_{\substack{f\in BX_p^r(\Bbb M)\\ f(\boldsymbol\xi_1)=\dots=f(\boldsymbol\xi_N)=0}}\left|\int_{\Bbb M}f({\bf x}){\rm d}\mu({\bf
   x})\right|.
 \end{equation}
\vskip2mm The study of optimal recovery and  quadratures for
Sobolev and Besov spaces is of significant importance and
relevance in both theoretical and practical contexts. In recent
years, extensive research has been conducted on these topics, for
example,  for the case of the torus $\mathbb{T}^d$, see
\cite{D,FHL,T1,T2,T3};  for the case of the sphere $\mathbb{S}^d$,
see \cite{BDSSWW,BH,BSSW,DW,He2,HMS,HS,HS2,HS3,HW,KN,W2,WS,WW}; for
other domains, see \cite{BCCGT,BCCGST,KPUU1,KPUU2,KS}.

An outstanding result in this field for the case of the compact
Riemannian manifold $\Bbb M$ is the work by D. Krieg and M.
Sonnleitner \cite{KS}, which characterized  optimal recovery and
quadrature for the Sobolev spaces using general sampling point set
$P$ by the distance function from $P$. More specifically, their
results can be formulated as follows: Let $1<p<\infty$, $1\le
q\le\infty$, and $r>d/p$. Then for any nonempty and finite set $P$
of points in $\Bbb M$, we have
\begin{equation}\label{4.4}
e(P;\,BH_p^r(\Bbb M), L_q(\Bbb M))\asymp e^{\rm lin}(P;\,BH_p^r(\Bbb M), L_q(\Bbb M))
\asymp \left\|{\rm dist}_{\Bbb M}(\cdot,P)\right\|_{L_\gamma(\Bbb M)}^{r-d\left(\frac1p-\frac1q\right)_+},
\end{equation}and
\begin{equation}\label{4.5}
  e(P;\,BH_p^r(\Bbb M), {\rm INT})\asymp  \left\|{\rm dist}_{\Bbb M}(\cdot,P)\right\|_{L_{rp^*}(\Bbb M)}^{r},
\end{equation}
where  the implied constants are independent of $P$, $1/p+1/p^*=1$, and
$$ \gamma:=\Bigg\{\begin{aligned}&r(1/q-1/p)^{-1},  &&q<p,
  \\&\infty, &&q\ge p.\end{aligned}$$
Meanwhile, it follows from the Ahlfors-regularity that for all $0<\gamma\le\infty$ and $\alpha>0$,
$$\inf_{\#P\le N}\left\|{\rm dist}_{\Bbb M}(\cdot,P)\right\|_{L_\gamma(\Bbb M)}^\alpha\asymp N^{-\alpha/d},$$
where the infimum is taken over all point sets of cardinality at
most $N$. Together with \eqref{4.4} and \eqref{4.5}, it deduces
that for $1<p<\infty$, $1\le q\le\infty$, and $r>d/p$,
\begin{equation}\label{4.6}
g_N(BH_p^r(\Bbb M),L_q(\Bbb M))\asymp g_N^{\rm lin}(BH_p^r(\Bbb M),L_q(\Bbb M))\asymp N^{-\frac rd+\left(\frac1p-\frac1q\right)_+},
\end{equation}
and
\begin{equation}\label{4.7}
e_N(BH_p^r(\Bbb M),{\rm INT})\asymp  N^{-r/d}.
\end{equation}
Indeed, \eqref{4.7} still holds for $p\in\{1,\infty\}$ (see \cite{BCCGST}). However,
the validity of \eqref{4.6} for $p\in\{1,\infty\}$ remains  open.

In this section, we will give  sharp estimates of sampling numbers
for the Sobolev classes $BH_p^r(\Bbb M)$. We also obtain relevant
results for the Besov classes $BB_{p,\tau}^r(\Bbb M)$.

\begin{thm}\label{thm4.1} Let $1\le p,q\le \infty$, $0<\tau\le\infty$. Then

 (i) for $r>d/p$, we have $$g_N(BX_p^r(\Bbb M),L_q(\Bbb M))\asymp N^{-\frac rd+\left(\frac1p-\frac1q\right)_+};$$

(ii) if $2$ is between $p$ and $q$, and if $r>d\max\{1/p,1/2\}$,
then we have
$$g_N^{\rm lin}(BX_p^r(\Bbb M),L_q(\Bbb M))\asymp N^{-\frac
rd+\left(\frac1p-\frac12\right)_+}.$$
\end{thm}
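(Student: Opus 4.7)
I would prove Theorem~\ref{thm4.1} by establishing matching upper and lower bounds on $g_N$ and $g_N^{\rm lin}$ separately. For the \emph{upper bounds}, the starting point is Lemma~\ref{lem4.5}: for every $n$, a maximal $\delta/n$-separated subset $\Xi$ of $\Bbb M$ with the weights $\{\lambda_\xi\}_{\xi\in\Xi}$ constitutes an $L_p$-MZ family of cardinality $N\asymp n^d$ whose global condition number depends only on $d$ and $p$. For part (i), Theorem~\ref{thm1.1} applied to $L_{n,p}^{\Bbb M}$ gives $\|f-L_{n,p}^{\Bbb M}(f)\|_{L_q(\Bbb M)}\lesssim n^{-r+d(1/p-1/q)_+}\|f\|_{X_p^r(\Bbb M)}$; since $L_{n,p}^{\Bbb M}$ is an $N$-point (in general nonlinear) sampling operator, substituting $n\asymp N^{1/d}$ yields $g_N(BX_p^r(\Bbb M),L_q(\Bbb M))\lesssim N^{-r/d+(1/p-1/q)_+}$. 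For part (ii), take $t=2$ so that $L_{n,2}^{\Bbb M}$ is linear and apply Lemma~\ref{thm3.3}; in the subcase $q\le 2\le p$, the probability-measure Jensen inequality $\|\cdot\|_{L_q(\Bbb M)}\le\|\cdot\|_{L_2(\Bbb M)}$, the embedding $X_p^r(\Bbb M)\hookrightarrow X_2^r(\Bbb M)$ (valid for $p\ge 2$), and Theorem~\ref{thm1.1} with $p=q=2$ together sharpen this to $n^{-r}=N^{-r/d}=N^{-r/d+(1/p-1/2)_+}$.

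For the \emph{lower bounds}, in part (i) I would appeal to \eqref{4.2}: given any set $P$ of $N$ nodes, exhibit $f\in BX_p^r(\Bbb M)$ vanishing on $P$ with the claimed $L_q$-mass. Ahlfors $d$-regularity lets one cover $\Bbb M$ by $\asymp N$ geodesic balls of radius $\rho\asymp N^{-1/d}$, a positive fraction of which avoids $P$ by pigeonhole; a smooth cutoff in one free ball has $X_p^r$-norm $\asymp\rho^{d/p-r}$ and $L_q$-norm $\asymp\rho^{d/q}$, producing after normalization the rate $\rho^{r-d(1/p-1/q)}\asymp N^{-r/d+1/p-1/q}$, while for $p\ge q$ a signed superposition of $\asymp N$ disjoint bumps yields the rate $N^{-r/d}$; the better of the two is $N^{-r/d+(1/p-1/q)_+}$. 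For part (ii), $g_N^{\rm lin}\ge\lambda_N$ together with the classical linear-width estimate $\lambda_N(BX_p^r(\Bbb M),L_q(\Bbb M))\asymp N^{-r/d+(1/p-1/2)_+}$ (the familiar ``Hilbert gain'' when $2$ is sandwiched between $p$ and $q$) supplies the matching lower bound.

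The \emph{main obstacle} is the upper bound of part (ii) in the regime $p\le 2\le q$: a naive invocation of Lemma~\ref{thm3.3} with $t=2$ only produces the exponent $(1/p-1/2)_++(1/2-1/q)_+=1/p-1/q$, strictly weaker than the target $1/p-1/2$ when $q>2$. Closing this gap should require a refined handling of $L_{n,2}^{\Bbb M}(f)-V_{n/2}(f)\in\mathcal P_n$ that exploits the Hilbert orthogonality of $L_{n,2}^{\Bbb M}$ in $\langle\cdot,\cdot\rangle_{(2)}$ together with a dyadic resummation of the discrete-$L_2$ estimate $\|f-V_{n/2}(f)\|_{(2)}\lesssim n^{-r+d(1/p-1/2)}\|f\|_{X_p^r(\Bbb M)}$ that already appears inside the proof of Lemma~\ref{thm3.3}, in a way that avoids the Nikolskii blow-up $n^{d(1/2-1/q)}$ incurred by the crude route.
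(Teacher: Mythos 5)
Your treatment of part (i) is essentially the paper's own argument: the upper bound comes from Lemma~\ref{lem4.5} (an $L_p$-MZ family with $N_n\asymp N\asymp n^d$) combined with Theorem~\ref{thm1.1}, and the lower bound from \eqref{4.2} together with bump functions supported on balls of radius $\asymp N^{-1/d}$ that avoid the node set --- a single normalized bump for $p<q$, a superposition of $\asymp N$ disjoint bumps for $q\le p$. The constants and exponents you record for these bumps ($\|\psi_j\|_{X_p^r(\Bbb M)}\lesssim N^{r/d-1/p}$, $\|\psi_j\|_{L_q(\Bbb M)}\asymp N^{-1/q}$) match \eqref{1} and \eqref{2}, so part (i) is in order.

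The genuine gap is in part (ii), and it is not the kind of gap that a ``refined handling'' of $L_{n,2}^{\Bbb M}$ can close. For $p\le 2<q$ you correctly compute that Lemma~\ref{thm3.3} with $t=2$ gives the exponent $(1/p-1/2)_++(1/2-1/q)_+=1/p-1/q$ and not the printed target $1/p-1/2$; but no algorithm can do better, because by \eqref{4.1} and part (i) one has $g_N^{\rm lin}\ge g_N\asymp N^{-r/d+1/p-1/q}$, which is strictly larger than $N^{-r/d+1/p-1/2}$ when $q>2$ (the single fooling function $f_0=N^{-r/d+1/p}\psi_1$ defeats every sampling operator, linear or not). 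The displayed exponent $(1/p-1/2)_+$ in the statement of (ii) is therefore a typo for $(1/p-1/q)_+$: that is the rate the paper's proof actually establishes and the one restated in the optimality discussion at the end of Section~\ref{sect4}. With the corrected exponent, your ``naive'' route is already the complete and intended argument: the upper bound is Lemma~\ref{thm3.3} with $t=2$ (where $L_{n,2}^{\Bbb M}$ is linear and $t$ between $p$ and $q$ collapses the two $(\cdot)_+$ terms to $(1/p-1/q)_+$), and the lower bound is inherited from part (i) via $g_N^{\rm lin}\ge g_N$. Your proposed lower bound through linear widths is both unsupported in this setting (the paper proves no width estimates on $\Bbb M$) and weaker than the bump-function bound, and the detour through Jensen's inequality and the embedding $X_p^r(\Bbb M)\hookrightarrow X_2^r(\Bbb M)$ in the regime $q\le 2\le p$ is superfluous, since Lemma~\ref{thm3.3} already yields the exponent $0=(1/p-1/q)_+$ there.
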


\begin{thm}\label{thm4.3} Let $1\le p\le \infty$, $0<\tau\le\infty$, and $r>d/p$. Then
$$e_N(BX_p^r(\Bbb M),{\rm INT})\asymp  N^{-r/d}.$$
\end{thm}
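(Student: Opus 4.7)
The plan is to establish $e_N(BX_p^r(\Bbb M),{\rm INT})\asymp N^{-r/d}$ via matching upper and lower bounds. For the upper bound, fix $n$ with $n^d\asymp N$ and, by Lemma \ref{lem4.5}, take a maximal $\delta/n$-separated subset that serves as the $n$-th layer $\mathcal{X}_n$ of an $L_p$-MZ family with $\#\mathcal{X}_n\asymp N$ and uniformly bounded global condition number. Consider the (generally nonlinear) sampling rule
\[
A_n(f):=\int_{\Bbb M}L_{n,p}^{\Bbb M}(f)({\bf x})\,{\rm d}\mu({\bf x}),
\]
which depends only on the values $\{f({\bf x}_{n,k})\}_{k=1}^{N_n}$. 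Since $(1/p-1)_+=0$ for $p\ge 1$, Theorem \ref{thm1.1} applied with $q=1$ yields
\[
\Bigl|\int_{\Bbb M}f\,{\rm d}\mu-A_n(f)\Bigr|\le\|f-L_{n,p}^{\Bbb M}(f)\|_{L_1(\Bbb M)}\lesssim n^{-r}\|f\|_{X_p^r(\Bbb M)}\asymp N^{-r/d}\|f\|_{X_p^r(\Bbb M)},
\]
so $e_N(BX_p^r(\Bbb M),{\rm INT})\lesssim N^{-r/d}$; the case $p=\infty$ is identical via the corresponding branch of Theorem \ref{thm1.1}.

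For the lower bound I would apply \eqref{4.3}. Given any $P\subset\Bbb M$ with $\#P\le N$, set $\rho:=cN^{-1/d}$ with $c>0$ chosen so that Ahlfors $d$-regularity produces a maximal $\rho$-separated subset $\Xi$ with $\#\Xi\ge 4N$. Since $\Xi$ is $\rho$-separated, each ball $B({\bf p},\rho/4)$ with ${\bf p}\in P$ contains at most one element of $\Xi$, so the subset $\Xi':=\{\xi\in\Xi:{\rm dist}_{\Bbb M}(\xi,P)>\rho/4\}$ satisfies $\#\Xi'\ge\#\Xi-\#P\ge 3N$. Fixing $\phi\in C^\infty([0,\infty))$ with $\phi(0)=1$ and ${\rm supp}\,\phi\subset[0,1/4]$, I form the fooling function
\[
f_N({\bf x}):=\rho^r\sum_{\xi\in\Xi'}\phi\bigl({\rm dist}_{\Bbb M}({\bf x},\xi)/\rho\bigr),
\]
whose bumps have pairwise disjoint supports lying entirely off $P$. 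Consequently $f_N|_P\equiv 0$, and Ahlfors regularity in normal coordinates around each $\xi$ gives $\int_{\Bbb M}f_N\,{\rm d}\mu\asymp\#\Xi'\cdot\rho^{r+d}\gtrsim N^{-r/d}$.

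The main technical obstacle is verifying $\|f_N\|_{X_p^r(\Bbb M)}\lesssim 1$. For the Sobolev case, working in normal coordinates around each $\xi$ and using the disjoint supports reduces the estimate to the standard Euclidean scaling
\[
\|f_N\|_{H_p^r(\Bbb M)}^p\lesssim\#\Xi'\cdot\bigl(\rho^{r}\cdot\rho^{-r+d/p}\bigr)^p=\#\Xi'\cdot\rho^d\lesssim 1.
\]
For the Besov case, I would split the defining sum at $2^J\asymp\rho^{-1}$: for $j\le J$ the trivial bound $E_{2^j}(f_N)_p\le\|f_N\|_{L_p(\Bbb M)}\lesssim\rho^r$ produces $\sum_{j\le J}2^{jr\tau}E_{2^j}(f_N)_p^\tau\asymp\rho^{r\tau}\cdot 2^{Jr\tau}\lesssim 1$; for $j>J$, Jackson's inequality \eqref{2.12} together with the scaling estimate $\|f_N\|_{H_p^{r'}(\Bbb M)}\lesssim\rho^{r-r'}$ (valid for any fixed $r'>r$ by the same disjoint-support argument) gives $\sum_{j>J}2^{jr\tau}E_{2^j}(f_N)_p^\tau\lesssim\rho^{(r-r')\tau}\cdot 2^{J(r-r')\tau}\lesssim 1$. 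Plugging these estimates into \eqref{4.3} yields $e_N(BX_p^r(\Bbb M),{\rm INT})\gtrsim N^{-r/d}$, completing the proof.
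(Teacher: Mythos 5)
Your proposal follows essentially the same route as the paper: the upper bound integrates the weighted least $\ell_p$ approximant over an MZ family supplied by Lemma \ref{lem4.5} and applies the approximation theorem, and the lower bound uses a sum of $\asymp N$ disjoint bumps of width $\asymp N^{-1/d}$ vanishing on the node set, with the Besov norm controlled by splitting the sum defining $\|\cdot\|_{B^r_{p,\tau}}$ at $2^J\asymp N^{1/d}$ and invoking Jackson's inequality together with a higher-order Sobolev bound --- exactly the paper's construction. The one point to repair is the bump profile: with merely $\phi\in C^\infty[0,\infty)$, $\phi(0)=1$, the function $\phi\bigl({\rm dist}_{\Bbb M}(\cdot,\boldsymbol\xi)/\rho\bigr)$ need not be smooth at $\boldsymbol\xi$ (in normal coordinates it is $\phi(|v|/\rho)$, which has a conical singularity unless the odd derivatives of $\phi$ vanish at $0$), so for large $r$ the claimed scaling of $\|f_N\|_{H_p^r(\Bbb M)}$ would fail; this is fixed by additionally requiring $\phi\equiv 1$ near $0$, or by transporting a fixed smooth Euclidean bump through local charts as the paper does.
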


\subsection{Proofs of Theorems \ref{thm4.1} and \ref{thm4.3}}
\

This subsection is dedicated to proving  the sharp estimates of
sampling numbers and optimal quadratures, which have been
partially proven in previous studies. However, in order to ensure
a comprehensive understanding, we will provide a complete proof
for both the upper and lower estimates in different and straight
way. The proof of the upper estimates is based on Lemma 3.1.
 The proof of the lower estimates relies on the well-known technique of fooling functions. Namely,
 we provide a function belonging to $ BX_p^r(\Bbb M)$ that vanishes on the given points set $X_N$ as constructed
 below.

\vskip2mm

\noindent\textbf{Construction of bump functions:}

Let $\Xi$ be a maximal $\vz/n$-separated subset with the
cardinality $2C_dN\le \#\Xi\asymp N$ in $\Bbb M$, where $C_d>0$ is
the constant given in \eqref{2.4-0}, and $\vz$ is a sufficiently
small positive number. Then $N\asymp n^d$. For an arbitrary finite
set of points $X_N:=\{\boldsymbol\xi_j:\,j=1,\dots,N\}$ in $\Bbb
M$, we set
$$G:=\left\{\boldsymbol\eta_j:\,\boldsymbol\xi_i\notin {\rm B}\left(\boldsymbol\eta_j,\vz/n\right),\ {\rm for\ all}\ i=1,\dots,N\right\}.$$
By \eqref{2.4-0} we have
$$\#G\ge 2C_dN-C_dN\ge C_dN\ge N.$$ Without loss of generality, we assume that $\boldsymbol\eta_1,\boldsymbol\eta_2,\dots,\boldsymbol\eta_N\in G$.
Then for each $j=1,\dots, N$, we can construct a bump function
$\psi_j({\bf x})$ supported on ${\rm
B}\left(\boldsymbol\eta_j,\vz/n\right)$ with
 $$\left\|(-\Delta_{\Bbb M})^k\psi_j\right\|_{L_p(\Bbb M)}\le CN^{2k/d},\ \ k=0,1,\dots,$$
 and
 $$\int_{\Bbb M}\psi_j({\bf x}){\rm d}\mu({\bf x})=N^{-1}.$$
Thus, the bump functions $\psi_j,j=1,\dots,N$ have disjoint
supports and vanish on $X_N$.

The construction of such functions in $\Bbb R^d$ can be done by
translating and dilating any fixed nonnegative $C^\infty$ smooth
function with compact support and integral $1$. Then one can
transport and normalize these functions to the local charts of the
manifold and, by compactness, the constant of $C$ can be chosen
independent of $j$ and $N$. See e.g. \cite[Lemma 2]{W2} for the
case of $\Bbb S^d$.

Also, it can be prove that for all $1\le p,q\le\infty$ and $r>0$,
\begin{equation}\label{1}
\left\|\psi_j\right\|_{H_p^r(\Bbb M)}\lesssim N^{r/d-1/p},\ \
\|\psi_j\|_{L_q(\Bbb M)}\asymp N^{-1/q}.
\end{equation}
See e.g. \cite{BCCGT} for details.

Moreover, we claim that for all $1\le p\le\infty$,
$0<\tau\le\infty$, and $r>0$,
\begin{equation}\label{2}
\left\|\psi_j\right\|_{B_{p,\tau}^r(\Bbb M)}\lesssim N^{r/d-1/p}.
\end{equation}
To prove this claim, we note that $E_{2^j}(\psi_j)_{p}\le
\|\psi_j\|_{L_p(\Bbb M)}$ for any $j\ge0$. Then by \eqref{1} we
obtain
\begin{align}\label{3}
  \sum\limits_{2^j<N^{1/d}}\Big(2^{jr} E_{2^j}(\psi_j)_{p}\Big)^\tau&\le \|\psi_j\|_{L_p(\Bbb M)}^\tau\sum\limits_{2^j<N^{1/d}}2^{jr\tau}\notag
  \\&\lesssim N^{r\tau/d-\tau/p}.
\end{align}
By \eqref{2.12}, we can choose a positive number $\upsilon>r$ such
that for any $j\ge0$,
 $$E_{2^j}(\psi_j)_{p}\lesssim 2^{-j\upsilon}\|\psi_j\|_{H_{p}^\upsilon(\Bbb M)},$$which, by \eqref{1}, leads to
 \begin{align}\label{4}
  \sum\limits_{2^j\ge N^{1/d}}\Big(2^{jr} E_{2^j}(\psi_j)_{p}\Big)^\tau&
  \le \|\psi_j\|_{H_{p}^\upsilon(\Bbb M)}^\tau\sum\limits_{2^j\ge N^{1/d}}2^{j(r-\upsilon)\tau}\nonumber
  \\&\asymp N^{(r-\upsilon)\tau/d}\|\psi_j\|_{H_{p}^\upsilon(\Bbb M)}^\tau\notag
  \\&\lesssim N^{r\tau/d-\tau/p}.
\end{align}
Combing  \eqref{3} with \eqref{4}, we get \eqref{2}.

Define $\psi({\bf x})=\sum_{j=1}^N\psi_j({\bf x})$. Then
$\psi|_{X_N}=0$. Furthermore, since $(I-\Delta_{\Bbb
M})^{r/2}\psi_j$, $j=1,\dots,N$ have disjoint supports, we can
immediately obtain
\begin{equation}\label{5}
\|\psi\|_{H_{p}^r(\Bbb M)}\lesssim N^{r/d},\ \ \ \int_{\Bbb
M}\psi({\bf x}){\rm d}\mu({\bf x})=1,\ \ \ \|\psi\|_{L_p(\Bbb
M)}\asymp 1.
\end{equation}
Meanwhile, by the same methods, we can prove that
\begin{equation}\label{6}
\|\psi\|_{B_{p,\tau}^r(\Bbb M)}\lesssim N^{r/d}.
\end{equation}We keep the above notations in the following.

\

\noindent{\it Proof of Theorem \ref{thm4.1}.}\

\emph{\textbf{Lower estimates:}} It follows from \eqref{4.2} that
$$ g_N(BX_p^r(\Bbb M),L_q(\Bbb M))\ge\inf_{\substack{\boldsymbol\xi_1,\dots,\boldsymbol\xi_N\in \Bbb M}}\sup_{\substack{f\in BX_p^r(\Bbb M)\\
  f(\boldsymbol\xi_1)=\dots=f(\boldsymbol\xi_N)=0}}\|f\|_{L_q(\Bbb
  M)}.
$$ Therefore, for  arbitrary $N$ points $\boldsymbol\xi_j,\,j=1,\dots,N$ in $\Bbb
M$, it suffices to show that there exist functions $f_0$ with
$$f_0(\boldsymbol\xi_j)=0,\ \ j=1,\dots,N,$$
such that
$$
\|f_0\|_{X_p^r(\Bbb M)}\lesssim 1\ \  \ {\rm and}\ \ \
\|f_0\|_{L_q(\Bbb M)}\asymp N^{-\frac
rd+\left(\frac1p-\frac1q\right)_+}.
$$
The following proof will be divided into two cases.

\emph{Case 1: $1\le q\le p\le\infty$.}

Let $f_0({\bf x})=N^{-r/d}\psi({\bf x})$. Then
$f_0(\boldsymbol\xi_j)=0,\ \ j=1,\dots,N$. Meanwhile, by \eqref{5}
and \eqref{6} we have
$$\|f_0\|_{X_p^r(\Bbb M)}\lesssim 1\ \  \ {\rm and}\ \ \ \|f_0\|_{L_q(\Bbb M)}\asymp N^{-\frac rd},$$
which  proves the lower estimates for  $1\le q\le p\le\infty$.

\emph{Case 2: $1\le p<q\le\infty$.}

Let $f_0({\bf x})=N^{-r/d+1/p}\psi_1({\bf x})$. Then
$f_0(\boldsymbol\xi_j)=0,\ \ j=1,\dots,N$. Meanwhile, by \eqref{1}
and \eqref{2} we have
$$\|f_0\|_{X_p^r(\Bbb M)}\lesssim 1\ \  \ {\rm and}\ \ \ \|f_0\|_{L_q(\Bbb M)}\asymp N^{-\frac rd+\left(\frac1p-\frac1q\right)},$$
which proves the lower estimates for  $1\le p<q\le\infty$.

\emph{\textbf{Upper estimates:}} According to Lemma \ref{lem4.5},
we know that for any $1\leq p\leq\infty$, there exists an
$L_p$-Marcinkiewicz-Zygmund family $\mathcal{X}$ on $\mathbb{M}$
with $N_n\asymp N\asymp n^{d}$. Let $L_{n,p}$ be the weighted
least $\ell_p$ approximation operator induced by $\mathcal{X}$.
Then for $1\leq p,q\le\infty$, $r>d/{p}$,
\begin{align*}
  g_N(BX_{p}^{r}(\mathbb{M}),L_{q}(\mathbb{M}))&\le\sup_{f\in BX_{p}^{r}(\mathbb{M})}\|f-L_{n,p}^{\mathbb{M}}(f)\|_{L_q(\mathbb{M})}
  \\&\lesssim n^{-r+d\left(\frac{1}{p}-\frac{1}{q}\right)_+}\asymp N^{-\frac rd+\left(\frac{1}{p}-\frac{1}{q}\right)_+}.
\end{align*}
In particular, assume that  2 is between $p$ and $q$, and
$r>d\max\{1/p,1/2\}$. Since $L_{n,2}^{\Bbb M}$ is linear,  Lemma \ref{thm3.3} gives
\begin{align*}
  g_N^{\rm lin}(BX_{p}^{r}(\mathbb{M}),L_q(\mathbb{M}))&\le\sup_{f\in BX_{p}^{r}(\mathbb{M})}\|f-L_{n,2}^{\mathbb{M}}(f)\|_{L_q(\mathbb{M})}
  \\&\lesssim n^{-r+d\left(\frac{1}{p}-\frac{1}{q}\right)_+}\asymp N^{-\frac rd+\left(\frac{1}{p}-\frac{1}{q}\right)_+}.
\end{align*}Together with \eqref{4.1}, we obtain the upper estimate of Theorems \ref{thm4.1}.

The proof of Theorem \ref{thm4.1} is completed. $\hfill\Box$

\

\noindent{\it Proof of Theorems \ref{thm4.3}.} \

 It suffices to verify
that for $1\le p\le \infty$, $0<\tau\le\infty$, and $r>d/p$,
$$e_N(BB_{p,\tau}^r(\Bbb M),{\rm INT})\asymp  N^{-r/d}.$$

For this purpose, let $X_N:=\{\boldsymbol\xi_j:\,j=1,\dots,N\}$ be
any subset of  $\Bbb M$. Based on the construction of bumping
functions in the front, we set $f_0({\bf x})=N^{-r/d}\psi({\bf
x})$. Then  $f_0|_{X_N}=0$,
$$\|f_0\|_{B_{p,\tau}^r(\Bbb M)}\lesssim 1\ \ {\rm and}\ \  \int_{\Bbb M}f_0({\bf x}){\rm d}\mu({\bf x})\asymp N^{-r/d}.$$
Therefore, we can derive from \eqref{4.3} that
\begin{align}
e_N(BB_{p,\tau}^r(\Bbb M),{\rm INT})&\ge \inf_{X_N\subset \Bbb M}\sup_{\substack{f\in BB_{p,\tau}^r(\Bbb M)\\
  f|_{X_N}=0}}
\left|\int_{\Bbb M}f({\bf x}){\rm d}\mu({\bf x})\right|\notag\\
&\gtrsim \inf_{X_N\subset \Bbb M}\left|\int_{\Bbb M}f_0({\bf
x}){\rm d}\mu({\bf x})\right|\gtrsim  N^{-r/d}. \label{4.21}
\end{align}

On the other hand, according to Lemma \ref{lem4.5}, we know that
for any $1\leq p\leq\infty$, there exists an
$L_p$-Marcinkiewicz-Zygmund family $\mathcal{X}$ on $\mathbb{M}$
with $N_n\asymp N\asymp n^{d}$. Let $L_{n,p}$ be the weighted
least $\ell_p$ approximation operator induced by $\mathcal{X}$ and
let
$$A_{X_N}(f):=\int_{\Bbb M}L_{n,p}^{\Bbb M}(f)({\bf x}){\rm d}\mu({\bf x}).$$Then by the H\"older inequality and Theorem \ref{rem1.1} we have
\begin{align*}
  e_N(BB_{p,\tau}^r(\Bbb M),{\rm INT})&\le\sup_{f\in BB_{p,\tau}^r(\Bbb M)}\left|\int_{\Bbb M}f({\bf x}){\rm d}\mu({\bf x})-A_{X_N}(f)\right|
  \\&\le\sup_{f\in BB_{p,\tau}^r(\Bbb M)}\|f-L_{n,p}^{\Bbb M}(f)\|_{L_p(\Bbb M)}
  \\&\lesssim n^{-r}\asymp N^{-r/d},
\end{align*}
which, combining with \eqref{4.21}, completes the proof.
$\hfill\Box$

\subsection{Optimality}\

Let us now proceed to discuss the optimality  of the weighted
least $\ell_p$ approximation and the least squares quadrature.
According to Lemma \ref{lem4.5}, we know that for any $1\leq
q\leq\infty$, there exists an $L_q$-Marcinkiewicz-Zygmund family
on $\mathbb{M}$ with $N_n\asymp N\asymp n^{d}$. Based on Theorems
\ref{thm1.1} and \ref{rem1.1}, we can conclude that for $1\leq
p,q\leq \infty$, $r>d/{p}$,
$$\sup_{f\in
BX_{p}^{r}(\mathbb{M})}\|f-L_{n,p}^{\mathbb{M}}(f)\|_{L_q(\mathbb{M})}
\asymp
N^{-\frac{r}{d}+\left(\frac{1}{p}-\frac{1}{q}\right)_+}\asymp
g_N(BX_{p}^{r}(\mathbb{M}),L_{q}(\mathbb{M})),$$and for $1\leq
p,q< \infty$, $r>d\max\{1/p,1/q\}$,
$$\sup_{f\in BX_{p}^{r}(\mathbb{M})}\|f-L_{n,q}^{\mathbb{M}}(f)\|_{L_q(\mathbb{M})}
\asymp
N^{-\frac{r}{d}+\left(\frac{1}{p}-\frac{1}{q}\right)_+}\asymp
g_N(BX_{p}^{r}(\mathbb{M}),L_{q}(\mathbb{M})).$$ This implies that
the weighted least $\ell_p$ (or $\ell_q$) approximation operator
$L_{n,p}^{\mathbb{M}}$ (or $L_{n,q}^{\mathbb{M}}$) serves as an
asymptotically optimal algorithm for optimal recovery of
$BX_{p}^{r}(\mathbb{M})$ measured in the $L_q(\Bbb M)$ norm,
subject to certain restrictions on $r$.

Also, assume that  2 is between $p$ and $q$, and
$r>d\max\{1/p,1/2\}$. It follows from Lemma 3.1  that
$$\sup_{f\in
BX_{p}^{r}(\mathbb{M})}\|f-L_{n,2}^{\mathbb{M}}(f)\|_{L_q(\mathbb{M})}
\asymp
N^{-\frac{r}{d}+\left(\frac{1}{p}-\frac{1}{q}\right)_+}\asymp
g_N^{\rm lin}(BX_{p}^{r}(\mathbb{M}),L_{q}(\mathbb{M})).$$
 This implies that the weighted least squares
approximation operator $L_{n,2}^{\mathbb{M}}$ is an asymptotically
optimal linear algorithm for optimal recovery of
$BX_{p}^{r}(\mathbb{M})$ measured in  the $L_q(\Bbb M)$ norm on
the condition that 2 is between $p$ and $q$, and
$r>d\max\{1/p,1/2\}$.

 For the least squares quadrature rule $I_n^{\mathbb{M}}$, it can be deduced from Theorem \ref{thm4.3}
 that for $2\leq p\leq\infty$ and $r>d/{2}$,
 $$\sup_{f\in BX_p^{r}(\mathbb{M})}\left|\int_{\mathbb{M}}f({\bf x}){\rm d}\mu({\bf x})-I_n^{\mathbb{M}}(f)
 \right|\asymp N^{-r/{d}}\asymp e_N(BX_p^{r}(\mathbb{M}),\rm{INT}).$$
  This indicates that the least squares quadrature rule $I_n^{\mathbb{M}}$ is asymptotically optimal quadrature
  rule for $BX_p^{r}(\mathbb{M})$ on
the condition that $2\leq p\leq\infty$ and $r>d/{2}$.

\vskip2mm We finish this section by considering two special cases
of compact Riemannian manifolds.
\begin{exam}Let $1\le p,q\le\infty$, $0<\tau\le \infty$, $r>d/p$, and let $\Delta:=\sum_{j=1}^d\partial^2/\partial x_j^2$ be the Laplace operator
on the $d$-torus $\Bbb T^d$ with eigenvalues $\{-4\pi^2\|{\bf
k}\|^2\}_{{\bf k}\in\Bbb Z^d}$ and eigenfunctions $\{\exp(2\pi
{\bf k}{\bf x})\}_{{\bf k}\in\Bbb Z^d}$. Let $L_{n,p}^{\Bbb T^d}$
be the weighted least $\ell_p$ approximation operator induced by
an $L_{p}$-MZ family $\mathcal X$ on $\Bbb T^d$, and let
$I_n^{\Bbb T^d}$ be the least squares quadrature rule induced by
an $L_{2}$-MZ family $\tilde {\mathcal X}$ on $\Bbb T^d$. Then
applying Theorems \ref{thm1.1} and \ref{thm1.2}, we have
$$\|f-L_{n,p}^{\Bbb T^d}(f)\|_{L_q(\Bbb T^d)}=\mathcal{O}\left(n^{-r+d\left(\frac1p-\frac1q\right)_+}\right),$$
for  approximating  $f\in BX_p^r(\Bbb T^d)$ from the samples
$\{{\bf x}_{n,k}:\,k=1,\dots,N_n\}$ of the $n$-th layer of
$\mathcal X$, and
$$\left|\int_{\Bbb T^d}f({\bf x}){\rm d}{\bf x}-I_n^{\Bbb
T^d}(f)\right|=\mathcal{O}\left(n^{-r}\right),$$ for approximating
$\int_{\Bbb T^d}f({\bf x}){\rm d}{\bf x}$,  $f\in BX_p^r(\Bbb
T^d)$, $2\le p\le \infty$ from the samples $\{\tilde{\bf
x}_{n,k}:\,k=1,\dots,\tilde N_n\}$ of the $n$-th layer of $\tilde
{\mathcal X}$. These estimates are optimal, which improves
\cite[Equations (4) and (22)]{G} as a special case $d=1$.
\end{exam}

\begin{exam} Let $1\le p,q\le\infty$, $0<\tau\le \infty$, $r>d/p$,  and let $\Delta_0$ be the Laplace-Beltrami operator
 on the sphere $\Bbb S^d$ with eigenvalues $\{-n(n+d-1)\}_{n=0}^\infty$ and
 eigenfunctions being
  the restriction to the sphere of homogenous harmonic polynomials in $\Bbb R^{d+1}$ (see
  \cite{DaX}). Let $L_{n,p}^{\Bbb S^d}$
be the weighted least $\ell_p$ approximation operator induced by
an $L_{p}$-MZ family $\mathcal X$ on $\Bbb S^d$, and let
$I_n^{\Bbb S^d}$ be the least squares quadrature rule induced by
an $L_{2}$-MZ family $\tilde {\mathcal X}$ on $\Bbb S^d$.
  Then applying Theorems \ref{thm1.1} and \ref{thm1.2}, we have
$$\|f-L_{n,p}^{\Bbb S^d}(f)\|_{L_q(\Bbb S^d)}=\mathcal{O}\left(n^{-r+d\left(\frac1p-\frac1q\right)_+}\right),$$
for  approximating $f\in BX_p^r(\Bbb S^d)$ from the samples
$\{{\bf x}_{n,k}:\,k=1,\dots,N_n\}$ of the $n$-th layer of
$\mathcal X$, and
 $$\left|\int_{\Bbb S^d}f({\bf x}){\rm d}\sigma({\bf x})-I_n^{\Bbb S^d}(f)\right|=\mathcal{O}\left(n^{-r}\right),$$
for approximating $\int_{\Bbb S^d}f({\bf x}){\rm d}\sigma({\bf
x})$,  $f\in BX_p^r(\Bbb S^d)$, $2\le p\le \infty$ from the
samples $\{\tilde{\bf x}_{n,k}:\,k=1,\dots,\tilde N_n\}$ of the
$n$-th layer of $\tilde {\mathcal X}$. These estimates are
optimal, which generalizes the results in \cite{LW}.
\end{exam}

\section{Concluding remarks}\label{sect5}

In this paper, we consider the weighted least $\ell_p$
approximation problem on compact Riemannian manifolds and derive
the approximation theorem  for the weighted least $\ell_p$
approximation for both Sobolev and Besov classes, which are
asymptotically optimal. The proofs are straightforward, although
it is worth noting their limitations.

Firstly, the proofs are specifically designed for the Riemannian
measure on compact Riemannian manifolds. Different techniques may
be required for general probability measures.  Secondly, we aspire
to extend our results to general metric spaces where the existence
of $L_p$-Marcinkiewicz-Zygmund families has been established by
Filbir and Mhaskar in a series of papers \cite{FM1,FM,MM,Mh2}.

\

\noindent\textbf{Acknowledgments}
  The authors  were
supported by the National Natural Science Foundation of China
(Project no. 12371098).

%%===========================================================================================%%
%% If you are submitting to one of the Nature Portfolio journals, using the eJP submission   %%
%% system, please include the references within the manuscript file itself. You may do this  %%
%% by copying the reference list from your .bbl file, paste it into the main manuscript .tex %%
%% file, and delete the associated \verb+\bibliography+ commands.                            %%
%%===========================================================================================%%

\bibliography{sn-bibliography}% common bib file
%% if required, the content of .bbl file can be included here once bbl is generated
%%\input sn-article.bbl

\end{document}